\documentclass[12pt]{article}
\usepackage{amsmath}
\usepackage{graphicx}
\usepackage{enumerate}
\usepackage{natbib}
\usepackage{enumitem}
\usepackage{url} 
\RequirePackage{amsthm,amsmath,amsfonts,amssymb}
\RequirePackage[colorlinks,citecolor=blue,urlcolor=blue]{hyperref}
\RequirePackage{graphicx}
\RequirePackage{algorithm}
\RequirePackage{algorithmic}
\usepackage[title]{appendix}

\usepackage{setspace}
\let\Algorithm\algorithm
\renewcommand\algorithm[1][]{\Algorithm[#1]\setstretch{1.2}}

\usepackage{bbm}
\usepackage{xr}

\newtheorem{theorem}{Theorem}

\newtheorem{proposition}{Proposition}
\newtheorem{property}{Property}

\theoremstyle{remark}

\newcommand{\blind}{1}

\addtolength{\oddsidemargin}{-.5in}%
\addtolength{\evensidemargin}{-.5in}%
\addtolength{\textwidth}{1in}%
\addtolength{\textheight}{-.3in}%
\addtolength{\topmargin}{-.8in}%

\begin{document}

\def\spacingset#1{\renewcommand{\baselinestretch}%
{#1}\small\normalsize} \spacingset{1}


\if1\blind
{
  \title{\bf Confidence Intervals for Unobserved Events}
  \author{Amichai Painsky\hspace{.2cm}\\
    Tel Aviv University, Israel}
  \maketitle
} \fi

\if0\blind
{
  \bigskip
  \bigskip
  \bigskip
  \begin{center}
    {\LARGE\bf Confidence Intervals for Unobserved Events}
\end{center}
  \medskip
} \fi

\bigskip

\begin{abstract}
Consider a finite sample from an unknown distribution over a countable alphabet. Unobserved events are alphabet symbols  which do not appear in the sample. Estimating the probabilities of unobserved events is a basic problem in statistics and related fields, which was extensively studied in the context of point estimation. In this work we introduce a novel interval estimation scheme for unobserved events. Our proposed framework applies selective inference, as we construct confidence intervals (CIs) for the desired set of parameters. Interestingly, we show that obtained CIs are dimension-free, as they do not grow with the alphabet size. Further, we show that these CIs are (almost) tight, in the sense that they cannot be further improved without violating the prescribed coverage rate. We demonstrate the performance of our proposed scheme in synthetic and real-world experiments, showing a significant improvement over the alternatives. Finally, we apply our proposed scheme to large alphabet modeling. We introduce a novel simultaneous CI scheme for large alphabet distributions which outperforms currently known methods while maintaining the prescribed coverage rate.  

\end{abstract}

\noindent%
{\it Keywords:}  Rule-of-three, Selective Inference, Large Alphabet Probability Estimation, Categorical Data Analysis, Missing Mass, Count Data
\vfill

\newpage
\spacingset{1.5} 

\section{Introduction}\label{intro}

Consider a probability distribution $p$ over a countable alphabet $\mathcal{X}$. Let $X^n$ be a sample of $n$ independent observations from $p$. Let $N_u(X^n)$ be the number of appearance of the symbol $u$ in the sample.  Unobserved (or unseen) events refer to outcomes which do not appear in the sample, $\mathcal{X}_0(X^n)=\{u\;|\;N_u(X^n)=0\}$. 
In this work we study inference of the unobserved. Specifically, we are interested in simultaneous confidence intervals (SCIs) for the parameters       $\mathcal{M}_p(X^n)=\{p(u)\;|\;N_u(X^n)=0\}$. This problem is of high interest in variety of domains. For example, in $2006$ the New Zealand Ministry of Health reported new cancer cases among local populations \cite{NZ_data}. Specifically, they focused on minorities in different age groups. Their study distinguished between $75$ cancer types. Their report indicated several interesting findings. For example, for Maori females under the age of $30$, there has been a total of $58$ new cancer cases that year. Importantly, out of the $75$ studied cancer types, only $37$ were observed. Does that mean that the remaining $38$ unobserved types of cancer are unlikely to appear in this population? The answer is obviously no, and we would like to infer the likelihood of these unobserved cancer types. Naturally, this example is just a special case of a broader problem, where multiple outcomes are studied and the sample size is limited. 

The classical approach for the studied problem is based on the \textit{rule-of-three}. The rule-of-three (ROT) suggests that for a sample of $n$ observations from a Bernoulli distribution, an approximate CI of level $1-\alpha$ is given by $[0,-\log(\alpha)/n]$ (as shown in Section \ref{Previous_Work}). Extending this result to a multinomial  setup requires a simple multiplicity correction. That is, 
a simultaneous CI (of level $1-\alpha$) for the unobserved satisfies $[0,-\log(\alpha/k)/n]$. As we can see, the obtained CI grows with the alphabet size $k$ and may be too conservative. More importantly, it requires the knowledge of $k$, which is not always available. For example, it is well-known that the number of cancer types is much greater than $75$, despite the report above \cite{SEER_data}. 

In this work we introduce a novel selective inference scheme for unobserved events. That is, we construct CIs only for the events that do not appear in the sample, while refraining from multiplicity correction over the entire alphabet size. To the best of our knowledge, this work is the first to directly address this basic problem. We distinguish between two setups. We first study the case where the alphabet size $k$ is unknown and even unbounded. We obtain a simple closed-form CI that is independent of $k$ and, most importantly, does not grow with it (as opposed to the ROT). Next, we focus on the setup where the alphabet size $k$ is known. Here, we introduce an efficient computational routine which utilizes the alphabet size and further improves our proposed CI. Then, we show that our results are tight. That is, we show that the length of our proposed CI cannot be further reduced (up to a negligible scale). We demonstrate the performance of our proposed scheme in synthetic and real-world experiments, and show it significantly improves upon the alternative.  Finally, we apply our results to large alphabet inference. We introduce a novel simultaneous CI scheme for large alphabet distributions which outperforms currently known methods while maintaining the prescribed coverage rate. 





\section{Previous Work}
\label{Previous_Work}
Consider a set of fixed and unknown parameters  $\theta_1,...\theta_k$. Given a confidence level $1-\alpha$, a simultaneous confidence region for $\{\theta_j\}_{j=1}^k$ is defined as a collection $\{T_j(X^n)\}_{j=1}^k$ such that 

\begin{align}\label{classical}
    P(\cup_j\{\theta_j\notin T_j(X^n)\})\leq \alpha.
\end{align}
In words, the probability that all $\theta_j$ simultaneously reside within their corresponding CI  $T_j(X^n)$ is not smaller than $1-\alpha$. Selective inference generalizes this framework and considers a subset of parameters of interest, selected during the experiment. For example, consider a linear regression problem with feature selection. Naturally, the parameters of interest are those selected by the model. In other words, we would like to infer on a (random) subset of parameters (the \textit{selected parameters}) and not the entire collection.

The problem of inference over selected hypotheses, parameters, or models was  recognized about seven decades ago (see \cite{ben2017concentration} for a detailed discussion). One of the first major contributions to the problem is due to Benjamini and Yekutieli, who considered the problem of constructing CIs for selected parameters \cite{benjamini2005false}. In their work, they showed that conditional coverage, following any selection rule for any set of (unknown) values for the parameters, is impossible to achieve. This means we cannot simply infer on the chosen parameters, given that they were selected. Benjamini and Yekutieli suggested an alternative viewpoint to the problem; instead of controlling the conditional coverage, the obstacle to avoid is that of making a false coverage statement. Specifically, given a selection rule, three outcomes are possible at each experiment; either a covering CI is constructed, a non-covering CI is constructed, or the interval is not constructed at all. Therefore, even though a $1-\alpha$ CI does not offer selective (conditional) coverage, the probability of constructing a non-covering CI is at most $\alpha$, 
\begin{align}\label{SoS}
    P(\cup_j\{\theta_j\notin T_j(X^n), \theta_j\text{ is selected}\})\leq \alpha.
\end{align}
This formulation is also known as \textit{Simultaneous over Selected} (SoS) (Equation (4) in \cite{benjamini2019confidence}). Selective inference was extensively studied over the years. In \cite{benjamini2005false}, Benjamini and Yekulieli relaxed (\ref{SoS}) and defined the \textit{false coverage rate} (FCR) as the expected proportion of parameters not covered by their CIs among the selected parameters. They introduced several controlling procedures for this formulation in different setups. The FCR framework was generalized and applied to a variety of applications. Lee et al. \cite{lee2016exact} and Tibshirani et al. \cite{tibshirani2016exact} constructed confidence interval for parameters selected by the Lasso and by forward step-wise selection, respectively. Berk et al. \cite{berk2013valid} addressed the problem of inference when the model is selected because a pre-specified explanatory variable had the highest statistical significance, which restricts the family over which simultaneous coverage is required. Weinstein and Yekutieli \cite{weinstein2020selective} designed FCR intervals that try to avoid covering zero. See \cite{benjamini2019confidence} for additional references and examples. Benjamini et al. revisited the problem of constructing confidence intervals for seleceted parameters in \cite{benjamini2019confidence}. They defined four controlling formulations, namely SoS (as appears in (\ref{SoS})), FCR, \textit{Simultaneous over all Possible selections} (SoP) and \textit{Conditional over Selected} (CoS),  (See (1)-(4) in \cite{benjamini2019confidence}). They focused their attention to SoS and studied the problem of SoS-controlling CIs for the $r$ largest parameters (of a given collection of parameters). 
A similar framework was also studied by Katsevich and Ramdas \cite{katsevich2020simultaneous}, who addressed simultaneous selective inference in testing under SoP. Specifically, They considered making selective inference statements on many selection rules, guaranteeing these statements hold simultaneously with high probability. 

In this work we study interval estimation of the unseen. This task may be viewed as a selective inference problem, as we construct CIs only for the parameters of the events that are missing from the sample. Since the selected parameters are data-dependent we focus our attention to the SoS framework (\ref{SoS}). In this sense, our work is an application of SoS-controlling CIs for this important problem.

Estimation of the unseen has been extensively studied over the years. Interestingly, most work focus on point estimation, in a variety of setups and applications. Perhaps the first major contribution to this problem dates back to Laplace in the $18^{th}$ century \cite{laplace2012pierre}. In his work, Laplace studied the \textit{sunrise problem};  given that the sun raised every morning until today, what is the probability that it will rise tomorrow? Laplace addressed the problem of unobserved events by adding a single count to all $k$ events in the alphabet (including the unobserved). Then, the desired estimate is simply the empirical frequency,   $1/(n+k)$. This scheme is also known as the  \textit{rule of succession}. The Laplace estimator was later generalized to a family of \textit{add-constant} estimators. An add-$c$ estimator assigns to a symbol that appeared $t$ times a probability proportional to $t+c$, where $c$ is a pre-defined constant.  Add-constant estimators hold many desirable properties, mostly in terms of their simplicity and interpretability \cite{orlitsky2003always}. 
However, when the alphabet size $k$ is large compared to the sample size $n$, add-constant estimators perform quite poorly \cite{orlitsky2003always}. Furthermore, add-$c$ estimators require the knowledge of the alphabet size $k$, which is not always available. Additional caveats of add-$c$ estimators are discussed in  \cite{gale1994s}.

Many years after Laplace, a major milestone was established in the work of Good and Turing \cite{good1953population},  while trying to break the Enigma Cipher during World War II \cite{orlitsky2003always}. The Good-Turing (GT) framework suggests that unobserved events shall be assigned a probability proportional to the number of events with a single appearance in the sample. This approach introduced a significant improvement compared to known estimators at the time. Furthermore, its promising performance and practical appeal have led many researchers to study and generalize these ideas. 

Unseen estimation is highly related to the \textit{missing mass} problem; the total probability of symbols which do not appear in the sample. Estimating the missing mass is a basic problem in statistics and related fields (see \cite{battiston2020consistent} and references therein). Further, it  corresponds to an important prediction  task. Namely, the problem of estimating the likelihood of encountering a future event which does not appear in the sample. Here too, the most popular approach is the GT estimator   (which is, again, proportional to the number of events with a single appearance in the sample). A variety of results were introduced over the years, focusing on the  properties of the missing mass the GT estimator. This includes, for example, asymptotic normality and large deviations \cite{gao2013moderate}, admissibility and concentration properties 
\cite{ben2017concentration}, expectation, consistency and convergence rates \cite{mcallester2000convergence,drukh2005concentration,mossel2019impossibility,painsky2021refined,painsky2022convergence,painsky2022data}, Bayesian estimation schemes  
\cite{lijoi2007bayesian,favaro2016rediscovery,favaro2012new}, and the
estimation of the missing mass in the context of feature models under minimax risk \cite{ayed2019good}. 

As mentioned above, there are many results on point and interval estimation of the missing mass. Yet, these results only apply to the total mass of the unseen. Notice that this problem is fundamentally different than ours. Specifically, we are interested in inferring the probability of each unobserved outcome (as required, for example, in the Maori cancer study), and not their sum. To the best of our knowledge, inference of the unobserved is currently considered only in the simple binomial case ($k=2$). The \textit{rule-of-three} (ROT) suggests that for a sample of $n$ identical samples from a Bernoulli distribution with a parameter $\theta$, the edge of the CI is given by $P(X^n=0) = \alpha$ which leads to $(1-\theta)^n = \alpha$. This implies  $n\log(1-\theta)\approx -n\theta=\log(\alpha)$, where the approximation follows from $\log(1-\theta)\approx-\theta$, for $\theta$ close to zero. Plugging $\alpha=0.05$ results in a one-sided CI of approximately $3/n$ (henceforth, rule-of-three). As mentioned in Section \ref{intro}, the ROT may be generalized to the control all the events that do not appear in the sample by applying a simple Bonferroni correction, leading to a CI of $[0,-\log(\alpha/k)/n]$. The ROT was further extended in different setups. For example, the Vysochanskij-Petunin inequality \cite{vysochanskij1980justification} shows that the ROT holds for unimodal distributions with finite variance, beyond just the binomial distribution. To the best of our knowledge, our contribution is the first to directly address SCIs for unobserved events in the multinomial setup. 

\section{Problem Statement}
\label{defs}
Denote the collection of \textit{missing probabilities} as 
\begin{align}
\mathcal{M}_p(X^n)=\{p(u)\;|\; N_u(X^n)=0\}.
\end{align}
We are interested in simultaneous one-sided CIs for the parameters in $\mathcal{M}_p(X^n)$. This corresponds to constructing a CI only for the greatest element in the set. Hence, our statistic of interest follows 
\begin{align}\label{M_max}
    M_{max}(X^n)=\max_{u \in \mathcal{X}} \mathcal{M}_p(X^n)=\max_{u \in \mathcal{X}}\{p(u)\mathbbm{1}(N_u(X^n)=0)\},
\end{align}
where $\mathbbm{1}(\cdot)$ is the indicator function. Notice that $M_{max}(X^n)$ depends on the (unknown) probability $p$, which is omitted from the syntax for brevity. Our goal is to construct a one-sided CI for $M_{max}(X^n)$, in a confidence level of $1-\alpha$. Specifically, we are interested in $T(X^n)$ such that
$P(M_{max}(X^n)\geq T(X^n))\leq \alpha.$
Unfortunately, the maximum operator is an involved,  non-smooth functional. Therefore, we begin our analysis by representing $M_{max}(X^n)$ as the limit of an $r$-norm. Specifically, given a sample $X^n$ and a fixed parameter $r\geq 1$, we define   
\begin{align} \label{M_r}
    M_r(X^n)\triangleq\sum_{u\in\mathcal{X}}p^r(u)\mathbbm{1}(N_u(X^n)=0).
\end{align}
The $r$-norm of the missing probabilities follows
\begin{align}
    ||\{p^r(u)\mathbbm{1}(N_u(X^n)=0)\}_{u 
    \in \mathcal{X}}||_r\triangleq (M_r(X^n))^{1/r}.
\end{align}
Consequently, we have that 
\begin{align}\label{r-norm lim}
\lim_{r\rightarrow \infty} (M_r(X^n))^{1/r}=\max_{u \in \mathcal{X}}p(u)\mathbbm{1}(N_u(X^n)=0)\triangleq M_{max}(X^n)
\end{align}
and 
$(M_t(X^n))^{1/t}\leq (M_{r}(X^n))^{1/r}$ 
for any $1\leq r\leq t$ \cite{maddox1988elements}. This means that $M_{max}(X^n)\leq (M_r(X^n))^{1/r}$ for every $r\geq 1$. 
Therefore, a $(1-\alpha)$-level confidence interval for $(M_r(X^n))^{1/r}$ is also an  $(1-\alpha)$-level confidence interval for $M_{max}(X^n)$,
\begin{align}\label{CI r-norm}
    P\left(M_{max}(X^n)\geq T(X^n)\right)\leq P\left((M_r(X^n))^{1/r}\geq T(X^n)\right)\leq \alpha.
\end{align}
Denote the expected value of $M_r(X^n)$ as
$E_{r,n}(p)\triangleq\mathbb{E}_{X^n \sim p}(M_r(X^n)).$
Define the worst-case (supremum) of $E_{r,n}(p)$ over $\mathcal{P}$ as

\begin{equation}\label{E_r(P)}
E_{r,n}(\mathcal{P}) \triangleq \sup_{p \in \mathcal{P}} E_{r,n}(p).
\end{equation}  
In this work we focus on two sets of probability distributions $\mathcal{P}$. Let $\Delta_k$ be the set of all distributions of an alphabet size $k$, while  $\Delta$ be the set of all distributions over any countable alphabet $\mathcal{X}$ (that is, $k\rightarrow \infty$). Markov's inequality suggests that for every $\lambda>0$,
\begin{align}\label{Markov}
    P(M_r(X^n) \geq \lambda)\leq \frac{E_{r,n}(p)}{\lambda}\leq \frac{E_{r,n}(\mathcal{P})}{\lambda},
\end{align}
where the second inequality follows from (\ref{E_r(P)}). Setting  $\alpha=E_{r,n}(\mathcal{P})/\lambda$, we have that  
\begin{align}\label{CI}
    &P\left(M_r(X^n)\geq \frac{E_{r,n}(\mathcal{P})}{\alpha}\right)=P\left(\left(M_r(X^n)\right)^{1/r}\geq \left(\frac{E_{r,n}(\mathcal{P})}{\alpha}\right)^{1/r}\right)\leq \alpha.
\end{align}
Plugging $\mathcal{P}=\Delta$ (alternatively, $\mathcal{P}=\Delta_k$), we obtain a one-sided confidence interval for $\left(M_r(X^n)\right)^{1/r}$ (and henceforth, $M_{max}(X^n)$) which holds for every $p \in \Delta$ (alternatively, $\Delta_k$). Notice that the obtained confidence interval is independent of the sample $X^n$, similarly to the ROT. This makes it a  robust non-random scheme, that generalizes the ROT for the multinomial setup. Further, notice that for $r=1$, (\ref{CI}) is a CI for the missing mass. In that sense, our proposed framework generalizes the missing mass problem, and introduces CIs for any $r$-norm of the missing probabilities, $M_r(X^n)$. Interestingly, point estimation of $M_r(X^n)$ was recently studied by Chandra and Thangaraj in quite a different context \cite{chandra2021estimation}. In their work, they showed that for $r \in (1,\infty)/\mathbb{N}$,
\begin{align}\label{chandra}
    \min_{\hat{M}_r (X^n)}\max_{p\in \Delta} \mathbb{E}(M_r(X^n)-\hat{M}_r(X^n))^2\leq O\left(\frac{1}{n^{2(r-1)}}\right)
\end{align}
where $\mathbb{N}$ is the set of natural numbers and $O(\cdot)$ is the standard big O notation \cite{bachmann1894analytische}. Similarly, for $n\geq2r$ and $r\in \mathbb{N}$, they attained a bound of $O\left({1}/{n^{2r-1}}\right)$. 
We discuss these bounds and compare them to our results later in Section \ref{unbounded_alphabet}. 

Taking a closer look at our derivation steps, one may wonder if the obtained data-independent CI is too conservative. Later in Section \ref{tightness}, we show that the proposed CI is indeed tight, in the sense that there exists a distribution $p$ for which (\ref{CI}) is attained with equality. This is not the typical notion of tightness in inference literature, where one would expect a CI to be tight for every $p$. Yet, this approach is quite common in the study of the unobserved. For example, notice that the popular ROT is also only tight in this sense, where only $\theta=-\log(\alpha)/n$ attains it equality. Additional examples in missing mass literature are discussed in \cite{rajaraman2017minimax,ben2017concentration,acharya2018improved,painsky2022generalized}. Let us proceed with our analysis and study $E_{r,n}(\mathcal{P})$ for both bounded ($\mathcal{P}=\Delta_k$) and unbounded ($\mathcal{P}=\Delta$) alphabets.  

\section{Unbounded Alphabet Size}
\label{unbounded_alphabet}

We begin our analysis with the unbounded alphabet setup. First, the expected value of $M_r(X^n)$ satisfies
\begin{align}\label{E(p) explicit}
    E_{r,n}(p)=\sum_{u\in \mathcal{X}} p^r(u)\mathbb{E}_{X^n \sim p} \left(\mathbbm{1}(N_u(X^n)=0) \right)=\sum_{u\in \mathcal{X}} p^r(u)(1-p(u))^n.
\end{align}
We would now like to bound (\ref{E(p) explicit}) from above, for every possible $p\in \Delta$. For this purpose, we introduce the following proposition.
\begin{proposition} \label{prop1}
Let $p$ be a distribution over a countable alphabet $\mathcal{X}$. Let $\phi:[0,1] \rightarrow \mathbb{R}$. Then, 
    $\sum_{u} p(u)\phi(p(u))\leq \max_{q \in [0,1]}\phi(q).$
Further, equality is achieved for a uniform $p$. 
\end{proposition}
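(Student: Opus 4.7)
The inequality is essentially a Jensen-type bound: interpreting the left-hand side as an expectation $\mathbb{E}_{U \sim p}[\phi(p(U))]$, it is a convex combination of real numbers $\phi(p(u))$, each evaluated at a point $p(u)\in[0,1]$. My plan is to exploit this directly, bypassing any deeper machinery.

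Concretely, I would proceed in two short steps. First, since $\{p(u)\}_{u\in\mathcal{X}}$ is a probability mass function (so $p(u)\geq 0$ and $\sum_u p(u) = 1$) and each $p(u)\in[0,1]$, one has $\phi(p(u)) \leq M$, where $M := \max_{q\in[0,1]}\phi(q)$. Second, substituting this pointwise bound into the series and pulling the constant $M$ outside the sum gives $\sum_u p(u)\phi(p(u)) \leq M \sum_u p(u) = M$, which is the claimed inequality.

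For the equality assertion I would evaluate the left-hand side at the uniform distribution over an alphabet of size $k$: each of the $k$ summands equals $(1/k)\phi(1/k)$, so the total is exactly $\phi(1/k)$. Hence equality is attained whenever the maximizer of $\phi$ on $[0,1]$ has the form $1/k$ for a positive integer $k$; in the applications of Section \ref{unbounded_alphabet} the function $\phi$ will depend on $n$ and $r$, and one can either choose $k$ to match the optimal argument or pass to a limit if the latter is not of that form. There is no substantive obstacle here — the argument is a one-line majorization of a weighted average by the sup of its terms. The only mild technical point worth verifying is that $\max_{q\in[0,1]}\phi(q)$ is actually attained rather than merely approached, which will be immediate from continuity of the specific $\phi$ on the compact interval $[0,1]$ in the intended applications.
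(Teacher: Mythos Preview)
Your proposal is correct and follows essentially the same approach as the paper: both recognize the left-hand side as the expectation $\mathbb{E}_{U\sim p}[\phi(p(U))]$ and bound it by the maximum value of $\phi$. Your treatment of the equality case is in fact slightly more careful than the paper's, which simply asserts equality for uniform $p$ without noting that this requires the maximizer of $\phi$ to be of the form $1/k$.
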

\begin{proof}
Let $Y \sim p$ and define a random variable $T(u)$, such that $T(u)=\phi(p(u))$. Then,
        $$\mathbb{E}(T(Y))=\sum_{u,\in \mathcal{X}} p(u)\phi(p(u))\leq \max_{q \in [0,1]}\phi(q),$$
        where in the last inequality, the expectation of a random variable is bounded from above by its maximal value.
        Notice that equality is achieved if all $p(u)$'s are equal.
     
\end{proof}
\noindent Applying Proposition \ref{prop1} to (\ref{E(p) explicit}) we obtain
\begin{align}\label{basic_unbounded}
    E_{r,n}(p)=\sum_{u\in \mathcal{X}} p^r(u)(1-p(u))^n\leq \max_{q\in[0,1]} q^{r-1}(1-q)^n=(q_{r,n}^*)^{r-1}(1-q_{r,n}^*)^n,
\end{align}
where $q_{r,n}^*=(r-1)/(r-1+n)$.
Further, equality is obtained for $p(u)=q_{r,n}^*$, which implies an alphabet size of $k=(r-1+n)/(r-1)$.
To conclude, for a given $r\geq1$ and an unbounded alphabet size, we have that 
\begin{align}\label{temp}
    E_{r,n}(\Delta)=(q_{r,n}^*)^{r-1}(1-q_{r,n}^*)^n,
\end{align}
which further implies that $E_{r,n}(\Delta)=O(1/n^{r-1})$. 
In addition, the distribution which attains the above is a uniform distribution over an alphabet size $k=(r-1+n)/(r-1)$.  Finally, a one-sided confidence interval for $(M_r(X^n))^{1/r}$ is necessarily a one-sided confidence interval for $M_{max}(X^n)$, for every $r\geq 1$. This leads to the following theorem. 

\begin{theorem}\label{theorem_analytical}
Let $p$ be a probability distribution over a countable alphabet $\mathcal{X}$. Let $X^n$ be $n$ independent samples from $p$. Let $M_{max}(X^n)$ be the maximum over the set of missing probabilities, as defined in (\ref{M_max}). Then, the following holds,   
\begin{align}\label{CI_unbounded}
    P\left(M_{max}(X^n)\geq \min_{r\geq 1}\left((q_{r,n}^*)^{r-1}(1-q_{r,n}^*)^n/\alpha  \right)^{1/r}\right)\leq\alpha 
\end{align}
where $q_{r,n}^*=(r-1)/(r-1+n)$. 
\end{theorem}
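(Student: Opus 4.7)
The theorem essentially packages together three ingredients that have already been developed in the preceding discussion, so the plan is to assemble them and then handle the minimization over $r$ carefully. The outline is as follows.

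First, I would fix an arbitrary $r\geq 1$. Using the explicit formula (\ref{E(p) explicit}) for $E_{r,n}(p)$, I would apply Proposition \ref{prop1} with $\phi(q)=q^{r-1}(1-q)^n$ to reduce the infinite sum to a one-dimensional optimization on $[0,1]$, exactly as in (\ref{basic_unbounded}). A quick calculus argument (differentiate and set to zero) shows that $\phi$ is maximized at $q_{r,n}^*=(r-1)/(r-1+n)$, so $E_{r,n}(\Delta)\leq (q_{r,n}^*)^{r-1}(1-q_{r,n}^*)^n$, matching (\ref{temp}).

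Second, I would invoke Markov's inequality (\ref{Markov}) with $\lambda=E_{r,n}(\Delta)/\alpha$ to obtain
\begin{equation*}
P\!\left((M_r(X^n))^{1/r}\geq \bigl((q_{r,n}^*)^{r-1}(1-q_{r,n}^*)^n/\alpha\bigr)^{1/r}\right)\leq \alpha.
\end{equation*}
Combining this with the monotonicity of $r$-norms (\ref{CI r-norm}), which gives the pointwise inequality $M_{max}(X^n)\leq (M_r(X^n))^{1/r}$ for every realization $X^n$, yields
\begin{equation*}
P\!\left(M_{max}(X^n)\geq T_r\right)\leq \alpha, \qquad T_r\triangleq\bigl((q_{r,n}^*)^{r-1}(1-q_{r,n}^*)^n/\alpha\bigr)^{1/r},
\end{equation*}
valid for every single $r\geq 1$.

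The only nontrivial point is the step from ``for every $r$'' to ``for the minimum over $r$''. The key observation is that $T_r$ is a deterministic quantity depending only on $n,r,\alpha$ and not on the sample $X^n$; in particular $\min_{r\geq 1}T_r$ is a fixed number. Let $r^*\in\arg\min_{r\geq 1}T_r$ (if the infimum is not attained, one takes a minimizing sequence and passes to a limit, using continuity of $r\mapsto T_r$). Then
\begin{equation*}
P\!\left(M_{max}(X^n)\geq \min_{r\geq 1} T_r\right)=P\!\left(M_{max}(X^n)\geq T_{r^*}\right)\leq \alpha,
\end{equation*}
which is precisely (\ref{CI_unbounded}). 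I do not expect any real obstacle: the theorem is essentially a clean packaging of Proposition \ref{prop1}, Markov's inequality, and $r$-norm monotonicity, with the only subtlety being the deterministic nature of the $T_r$'s that legitimizes the optimization over $r$ outside the probability.
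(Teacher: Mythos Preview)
Your proposal is correct and follows essentially the same route as the paper: apply Proposition~\ref{prop1} with $\phi(q)=q^{r-1}(1-q)^n$ to obtain (\ref{temp}), then combine Markov's inequality (\ref{Markov}) with the $r$-norm monotonicity (\ref{CI r-norm}) to get a CI for $M_{max}$ for each fixed $r$, and finally minimize over $r$. If anything, you are more explicit than the paper about why minimizing over $r$ is legitimate (namely, that each $T_r$ is deterministic), a point the paper leaves implicit.
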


Notice that the obtained one-sided CI (\ref{CI_unbounded}) is independent of the alphabet size $k$. This means that our proposed CI is constant, and does not grow with $k$, as opposed to the Bonferroni-corrected ROT (see Section \ref{Previous_Work}). As we further examine our results, we observe that for every fixed $r\geq 1$, the $r$-norm of the missing probabilities $M_r(X^n)$ satisfies 
\begin{align}\label{ours}
    P\left(M_{r}(X^n)\geq E_{r,n}(\Delta)/\alpha  \right)\leq\alpha 
\end{align}
where $E_{r,n}(\Delta)=O(1/n^{r-1})$. Let us compare this result to \cite{chandra2021estimation}. Applying Markov's inequality to (\ref{chandra})  we obtain 
\begin{align}
    P\left(|M_{r}(X^n)-\hat{M}_r(X^n)|\geq \lambda\right)\leq\max_{p\in\Delta} \mathbb{E}\left(M_{r}(X^n)-\hat{M}_r(X^n)\right)^2/\lambda^2.
\end{align}
Interestingly, for $r>1$ this leads to a one-sided CI of length $O(1/n^{r-1})$, similarly to (\ref{ours}). However, we emphasize that (\ref{chandra}) is obtained by a data-dependent estimator of $M_r(X^n)$, which also depends on $r$. This means that the choice of $r$ which minimizes the CI for $M_{max}(X^n)$ (as in (\ref{CI_unbounded})) also depends on the sample and is henceforth invalid. However, this analysis emphasizes the tightness of our bound (\ref{basic_unbounded}) and its resulting CI for $M_{max}(X^n)$, even if we compare it to a data-dependent scheme. 

\section{Bounded Alphabet Size}

Let us now study the case where the alphabet size is bounded from above. This is a typical setup, for example, in experimental studies where the number of outcomes is known a priori to the experiment. As discussed in Section \ref{defs}, our proposed CI depends $E_{r,n}(\mathcal{P})$, where $\mathcal{P}=\Delta_k$ in this setup. Therefore, our goal is to  maximize $E_{r,n}(p)$ over $p \in \Delta_k$,
\begin{align} \label{bounded_objective}
E_{r,n}(\Delta_k)=\max_{p\in \Delta_k} \sum_{u\in\mathcal{X}}p^r(u)(1-p(u))^n.
\end{align}
Unfortunately, this optimization problem does not hold a closed form solution. However, we show that it may be efficiently evaluated from  simple optimization considerations. We begin our analysis with the following property.
\begin{property}\label{property1}
Let $E_{r,n}(p)=\sum_{u \in \mathcal{X}} p^r(u)(1-p(u))^n$. Let $$t^*=\frac{r}{r+n},\;\;t_{1,2}=t^*\pm \frac{1}{r+n}\sqrt{\frac{rn}{r+n-1}}.$$
Assume $0\leq t_1\leq t^*\leq t_2\leq 1$. 
For $r\geq 1$, the summand, $h(t)=t^r(1-t)^n$ satisfies the following:
\begin{enumerate}
    \item $h(t)$ has a local maximum at $t^*$
    \item $h(t)$ is concave in $t$, for $t_1 \leq t \leq t_2$.
    \item $h(t)$ is convex in $t$, for $0\leq t\leq t_1$ and $t_2\leq t \leq 1$ .
    
\end{enumerate}
\end{property}
The proof of the above directly follows from the derivatives of the summand, $h(t)=t^r(1-t)^n$, and is located in Appendix A. Property \ref{property1} shows that the function, $p^r(u)(1-p(u))^n$, consists of three separate regions, characterized by their concavity and convexity. This allows us to characterize the maximum of our objective.

\begin{theorem}\label{theorem2}
Let $p^* \in \Delta_k$ be the maximizer of $E_{r,n}(p)=\sum_u p^r(u)(1-p(u))^n$ over $\Delta_k$. Then, for $r\geq 1$ the following holds.
\begin{enumerate}
\item $p^*(u)=p^*(v)$ for every  $p^*(u),p^*(v)\in \left[t_1,t_2\right]$.
\item There exists at most a single $p^*(u)$ such that $p^*(u)\in \left(0,t_1\right)$

\item There exists at most a single $p^*(u)$ such that $p^*(u)\in \left(t_2,1\right]$

\end{enumerate}
\end{theorem}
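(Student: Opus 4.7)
The plan is to fix a global maximizer $p^* \in \Delta_k$ of $E_{r,n}$ and prove each clause by contradiction, constructing a feasible perturbation that strictly increases the objective whenever the clause fails. Each perturbation modifies only two coordinates of $p^*$ while preserving their sum, so that the perturbed vector remains in $\Delta_k$. Both arguments rest on Property~\ref{property1}, sharpened by the standard observation (read off $h''$ in its proof) that $h(t)=t^r(1-t)^n$ is \emph{strictly} concave on $(t_1, t_2)$ and \emph{strictly} convex on $(0, t_1)$ and on $(t_2, 1)$.

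For Part~1, suppose $p^*(u), p^*(v) \in [t_1, t_2]$ with $p^*(u) \neq p^*(v)$. Let $\tilde p$ agree with $p^*$ except that both coordinates are replaced by the midpoint $m = (p^*(u) + p^*(v))/2$, which lies in $(t_1, t_2)$ since $p^*(u) \neq p^*(v)$. Strict concavity of $h$ on $(t_1, t_2)$ yields $h(p^*(u)) + h(p^*(v)) < 2h(m)$, hence $E_{r,n}(\tilde p) > E_{r,n}(p^*)$, contradicting optimality.

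For Parts~2 and~3, suppose two masses $a = p^*(u)$ and $b = p^*(v)$ both lie in one convex interval $I \in \{(0, t_1),\ (t_2, 1)\}$, where the upper endpoint $1$ of $(t_2,1]$ is automatically excluded because $a, b > t_2 > 0$ combined with $a + b \leq 1$ rules out either value equaling $1$. For sufficiently small $|\epsilon|$ the perturbation $\tilde p_\epsilon$ that sends $p^*(u) \mapsto a + \epsilon$ and $p^*(v) \mapsto b - \epsilon$ keeps both values in $I$ and lies in $\Delta_k$. Let $\phi(\epsilon) = h(a + \epsilon) + h(b - \epsilon)$. If $\phi'(0) = h'(a) - h'(b) \neq 0$, perturbing in the direction of $\mathrm{sign}(\phi'(0))$ strictly increases $\phi$. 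Otherwise $\phi'(0) = 0$, and strict convexity of $h$ on $I$ gives $\phi''(0) = h''(a) + h''(b) > 0$, so $\epsilon = 0$ is a strict local minimum of $\phi$ and any small nonzero $\epsilon$ strictly increases $\phi$. Either way $E_{r,n}(\tilde p_\epsilon) > E_{r,n}(p^*)$, contradicting maximality.

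The main obstacle is bookkeeping rather than conceptual: one must verify simultaneously that each perturbation remains inside the simplex and inside the curvature region where Property~\ref{property1} applies, and dispatch the endpoint case in the upper convex interval. Both checks are immediate once the perturbations are set up, so the theorem follows directly from Property~\ref{property1} and the standard strict-concavity/convexity consequence of strictly signed second derivatives.
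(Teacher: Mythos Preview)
Your proof is correct and follows the same overall strategy as the paper's (Appendix~B): for each clause, a two-coordinate, sum-preserving perturbation is constructed that strictly increases $E_{r,n}$, using the concavity/convexity of $h$ from Property~\ref{property1}. Part~1 is identical to the paper; for Parts~2 and~3 the paper uses an explicit finite perturbation pushing one mass to the boundary of the convex region (with a case split in Part~2 depending on whether $p^*(u)+\delta$ remains below $t_1$), whereas your local first/second-derivative argument is slightly cleaner and sidesteps that case analysis.
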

In words, all $p^*(u)$ that are located in the concave region are identical, and there exists at most a single $p^*(u)$ in the interior of the convex regions. These properties are a direct consequence of the convexity/concavity regions of the summand. The detailed proof is located in Appendix B. Proposition \ref{theorem2} shows that the maximizer of $E_{r,n}(p)$ over $\Delta_k$ depends on not more than four free parameters. Surprisingly, this results holds for every $k$. In other words, we may numerically evaluate $E_{r,n}(\Delta_k)$ by considering only four free parameters, for every given $k$. This allows us to numerically evaluate the CI in a relatively small computational cost, even when the dimension of the problem increases, for every examined $r\geq 1$, and choose the value of $r$ which minimizes the CI (similarly to (\Ref{CI_unbounded})).

\section{Tightness Analysis}\label{tightness}
The derivation of the proposed CI utilizes several relaxations and inequalities, such as the $r$-norm (\ref{CI r-norm}) and Markov inequality (\ref{Markov}). Therefore, it is of a reasonable concern that the obtained CI is over pessimistic. Here, we show that this it not the case. Specifically, we show that there exists a  distribution $p\in \Delta$ for which the proposed CI is (almost) tight.  

As we revisit our analysis in the unbounded alphabet setup (\ref{basic_unbounded}), we observe that  $E_{r,n}(p)\leq (q^*_{r,n})^{r-1}(1-q^*_{r,n})^n$, where equality holds if $p(u)=q^*_{r,n}$. In words, $E_{r,n}(p)$ attains its maximum for a uniform distribution over an alphabet of size $k^*=1/q^*_{r,n}$. This means that in practice, even if the alphabet size $k$ is known to be greater than $k^*$, the worst-case distribution which attains $E_{r,n}(p)$ with equality is a uniform distribution with $p(u)=q^*_{r,n}$ for $k^*$ symbols, and $p(u)=0$ for the remaining alphabet. Interestingly, this type of distributions also attain Markov inequality with equality. Specifically, let $\mathcal{U}_k$ be the set of uniform distributions over an alphabet size $m\leq k$.  Then, for every $p_m\in\mathcal{U}_k$, we have $M_{max}(X^n)\in\{0,1/m\}$ and $P(M_{max}(X^n)\geq 1/m)=m{\mathbb{E}_{X^n\sim p}M_{max}(X^n)}$. This motivates exploring $\mathcal{U}_k$ as a set of distributions for which our proposed CI may be tight. 

As mentioned in Section \ref{defs}, deriving an exact CI for $M_{max}(X^n)$, even when the underlying distribution $p$ is known, is not an easy task. However, we now show it is possible in several special cases, such as $p_m\in \mathcal{U}_k$. We begin with the following proposition.   

\begin{proposition} \label{prop2}
    Let $X^n$ be a sample of $n$ independent observations from $p_m \in \mathcal{U}_k$. Then, 
    \begin{align}\label{worst-case}
   P\left(M_{max}(X^n) \geq \frac{1}{m}\right)=1-\frac{m!S(n,m)}{m^n}
   \end{align}
where $S(n,m)=\frac{1}{m!}\sum_{j=0}^m(-1)^j\binom{m}{j}(m-j)^n$ is \textit{Stirling number of the second kind}.
\end{proposition}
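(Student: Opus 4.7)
The plan is to reduce the proposition to a standard surjection-counting problem. Since $p_m \in \mathcal{U}_k$ assigns mass $1/m$ to exactly $m$ symbols of $\mathcal{X}$ and mass $0$ to the remaining $k-m$, every symbol $u$ in the support satisfies $p(u)=1/m$, while every symbol outside the support satisfies $p(u)=0$. Consequently, the indicator $\mathbbm{1}(N_u(X^n)=0)$ contributes to $M_{max}(X^n)$ only through support symbols, and the resulting value of $M_{max}(X^n)$ is either $0$ (when every support symbol appears in the sample) or $1/m$ (when at least one support symbol is missing). This dichotomy is the reason the CI can be computed in closed form.

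Given this observation, the event $\{M_{max}(X^n)\geq 1/m\}$ is precisely the event that at least one of the $m$ support symbols fails to appear in $X^n$. Its complement is the event that every support symbol appears at least once, i.e.\ that the map from sample indices $\{1,\dots,n\}$ to support symbols induced by $X^n$ is surjective. I would therefore write
\[
P\left(M_{max}(X^n)\geq \tfrac{1}{m}\right)=1-P\big(\text{all $m$ support symbols appear in } X^n\big).
\]

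The next step is to evaluate the surjection probability. Because the $n$ samples are i.i.d.\ uniform over the $m$ support symbols, each of the $m^n$ possible sequences is equally likely. The number of such sequences in which every support symbol appears at least once equals the number of surjections from an $n$-element set onto an $m$-element set, which by a standard inclusion–exclusion argument (exclude sequences missing a given subset of support symbols) equals
\[
m!\, S(n,m) \;=\; \sum_{j=0}^{m}(-1)^j\binom{m}{j}(m-j)^n.
\]
Dividing by $m^n$ and substituting into the previous display yields (\ref{worst-case}).

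There is really no major obstacle here: the result is a direct consequence of the two-point structure of $M_{max}(X^n)$ under a uniform distribution combined with the classical formula for the number of surjections. The only thing worth being careful about is that $M_{max}$ is insensitive to the $k-m$ zero-mass symbols (since $p(u)\mathbbm{1}(N_u(X^n)=0)=0$ for them), so the answer depends only on $m$ and $n$, not on the ambient alphabet size $k$. This also explains why the bound will later serve as a tightness certificate inside a family $\mathcal{U}_k$ indexed only by $m$.
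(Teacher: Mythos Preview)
Your proposal is correct and follows essentially the same approach as the paper: both reduce the event $\{M_{max}(X^n)\geq 1/m\}$ to the balls-in-bins event that at least one of the $m$ equiprobable support symbols is unoccupied, and then invoke the surjection count $m!\,S(n,m)$ over the $m^n$ equally likely sequences. Your write-up is in fact a bit more explicit than the paper's, noting carefully that the $k-m$ zero-mass symbols do not affect $M_{max}$ and spelling out the inclusion--exclusion identity for the number of surjections.
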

   
The proof of Proposition \ref{prop2} utilizes simple combinatorial properties. Specifically, given that $p_m$ is a uniform distribution over an alphabet size $m$, we have that $M_{max}(X^n)=1/m$ if and only if there exists at least one symbol that do not appear in the sample, where all symbols are equiprobable. A detailed proof is provided in  Appendix C.

Now, define $m_\alpha$ as the largest value of $m$ for which $1-\frac{m!S(n,m)}{m^n}\leq \alpha$. Then, $1/m_\alpha$ is the $\alpha$-quantile of $M_{max}(X^n)$. This means that for $X^n \sim p_{m_\alpha}$, we cannot set any constant $c<1/m_\alpha$ such that $P(M_{max}(X^n)\geq c)\leq \alpha$. In other words, $p_{m_\alpha}$ requires a CI larger than $1/m_{\alpha}$ in order to control $M_{max}(X^n)$ in a confidence level of at least $1-\alpha$. The implications of this result are fairly simple. In order to control every $p\in \Delta$ in a confidence level of $1-\alpha$, a (constant) confidence interval of size of at least $1/m_\alpha$ is inevitable. In other words, $p_{m_\alpha}$ is the distribution which requires the tightest CI (among all the distributions in $\mathcal{U}_k$), and it is therefore the most challenging to control. We denote it as the \textit{worst-case} distribution.  In the following section we compare our proposed CIs with $1/m_\alpha$ and show that the difference is practically negligible.   

\section{Experiments}\label{experiments}
We now illustrate the performance of our proposed CIs in synthetic and real-world experiments. First, we study six example distributions, which are common benchmarks for probability estimation and related problems \cite{orlitsky2015competitive}. The Zipf's law distribution is a typical benchmark in large alphabet probability estimation; it is a commonly used heavy-tailed distribution, mostly for modeling natural (real-world) quantities in physical and social sciences, linguistics, economics and others fields \cite{saichev2009theory}. The Zipf's law distribution follows $p(u;s,k)={u^{-s}}/{\sum_{v=1}^k v^{-s}}$ where $k$ is the alphabet size and $s$ is a skewness parameter.  Additional examples of commonly used heavy-tailed distributions are the geometric distribution, $p(u;\alpha)=(1-\alpha)^{u-1}\alpha$, the negative-binomial distribution (specifically, see \cite{efron1976estimating}), $p(u;l,r)= \binom{u+l-1}{u} r^u(1-r)^l$ and the beta-binomial distribution $p(u;k,\alpha,\beta)= \binom{k}{u}{B(u+\alpha,k-u+\beta)}/{B(\alpha,\beta)}$. Notice that the support of the geometric and the negative binomial distributions is infinite. Therefore, for the purpose of our experiments, we truncate them to an alphabet size $k$ and normalize accordingly. Additional example distributions are the uniform, $p(u)=1/k$, and the worst-case distribution, which is simply a uniform distribution over an alphabet size $1/m_\alpha$, as discussed in Section \ref{tightness}. 

In each experiment we draw $n=1000$ samples, and compare the lengths of different CIs for an increasing alphabet size. Figure \ref{simulated_exp} illustrates the results we achieve. The red curve on top 
corresponds to the Bonferroni-corrected ROT, as discussed in Section \ref{Previous_Work}. As expected, it grows logarithmically with the alphabet size $k$. The blue curve below it is our proposed CI, for a known alphabet size $k$, while the blue dashed curve corresponds to the unbounded alphabet size. As we can see, the bounded $k$ curve is of similar length the ROT CI, for smaller values of $k$. However, as the alphabet size increases, it converges to the unobounded $k$ performance, as expected.  It is also evident that while the ROT CI grows with $k$, our proposed schemes are fixed, and demonstrate significantly shorter confidence intervals, while maintaining the desired coverage rate. As we examine the value of $r$-norm which minimizes our CI, we observe that it increases with $k$ (for the bounded $k$ scheme) and converges to approximately $r=10$. Finally, the black curve at the bottom is an Oracle CI, who knows the underlying distribution $p$. Specifically, the Oracle CI is simply the $\alpha$-quantile of $M_{max}(X^n)$ in the case where $p$ is known. This serves us as a lower bound, for the best we can achieve in each experiment. We focus our attention to the worst-case distribution, which we study in detail in Section \ref{tightness}. As we can see, the Oracle CI is almost identical to our proposed scheme in this setup. This means that our CIs are universally tight, in the sense that there exists a  distribution for which they we cannot be shorten. 

\begin{figure}[ht]
\centering
\includegraphics[width =0.8\textwidth,bb=10 70 750 540,clip]{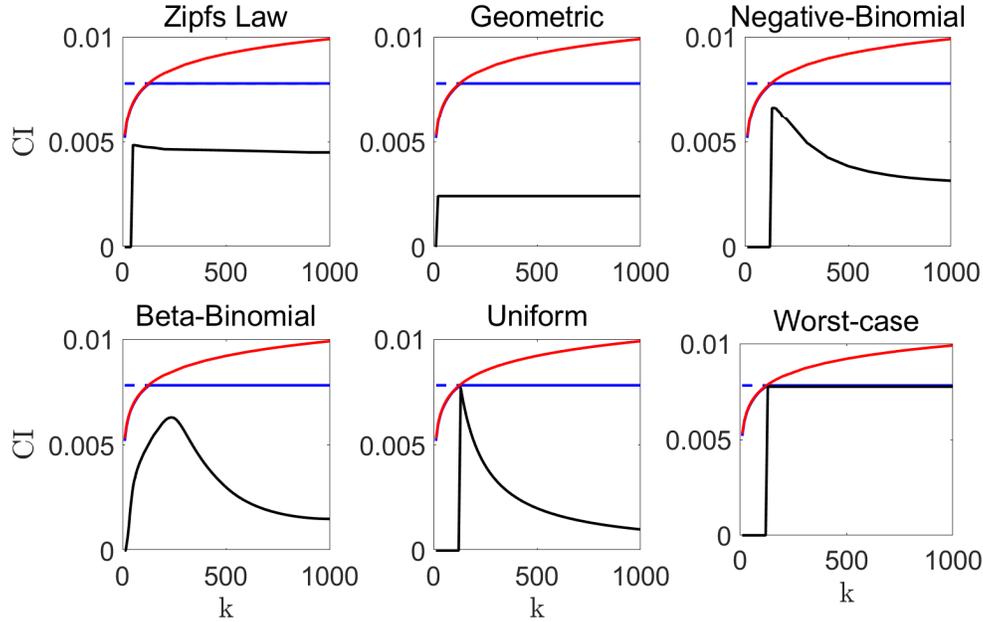}
\caption{confidence intervals lengths in six synthetic experiments. We use the following parameters: Zipf's Law: $s=1.01$, Geometric: $
\alpha=0.4$, Negative-Binomial: $l=1, r=0.003$, Beta-Binomial: $\alpha=\beta=2$. The red curve on top is the rule-of-three CI, the blue curve below it is our proposed CI for a known alphabet size $k$. The blue dashed curve is our proposed method for unbounded $k$ and the black curve at the bottom is an Oracle CI, who knows the underlying distribution $p$.}
\label{simulated_exp}
\end{figure}

Next, we turn to real-world experiments. Here, we follow \cite{orlitsky2016optimal} and study three application domains. Notice that in these real-world settings, the true underlying probability is unknown. Hence, the missing probabilities refers to the frequency of symbols, in the full data-set, that do not appear in the sample. We begin with a corpus linguistic experiment. For this purpose we study a collection of word frequencies in English. 
Specifically, we consider a list of word frequencies, collected from open source subtitles \cite{subtitles1,subtitles2}. This list describes the frequency each word appears in the text, based on hundreds of millions of samples. We randomly sample $n$ words (with replacement) from the list, and construct a CI for the missing probabilities. The left chart of Figure \ref{real_world_exp} demonstrates the CI of our proposed scheme, compared to the Bonferroni-corrected ROT. Notice we focus on the unbounded $k$ scheme, as it is more robust and may better describe the alphabet size in this setup (all the words in the English language). Next, we focus on a biota analysis. Gao et al. \cite{gao2007molecular} considered the forearm skin biota of six subjects. They identified a total of $1{,}221$ clones consisting of $182$ different species-level operational taxonomic units (SLOTUs). As above, we sample $n$ out of the $1{,}221$ clones with replacement, and construct CI for the missing probabilities of the distinct SLOTUs found. The middle chart of  Figure \ref{real_world_exp} demonstrates the results we achieve. Finally, we study census data. The right chart of Figure  \ref{real_world_exp} considers the $2000$ United States Census \cite{us2014frequently}, which lists the frequency of the top $1000$ most common last names in the United States. Here too, we sample $n$ names and construct corresponding CIs. Similarly to the synthetic experiments, our proposed scheme demonstrates shorter CIs than the ROT in the three examined setups, where the difference is typically more evident in the small $n$ regimes. 

\begin{figure}[ht]
\centering
\includegraphics[width =0.8\textwidth,bb=10 140 745 470,clip]{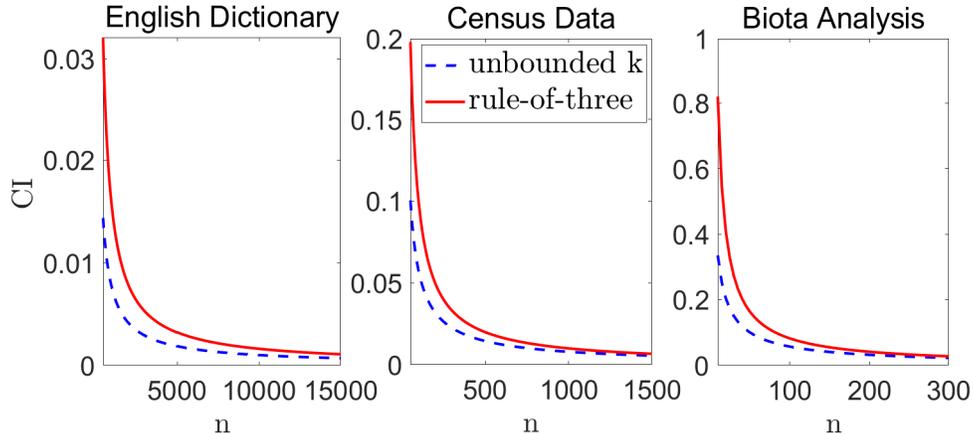}
\caption{confidence intervals lengths in three real-world experiments.}
\label{real_world_exp}
\end{figure}

\section{Application to Large Alphabet Inference}\label{multinomial_intro}

Inference and estimation of unseen events is a corner-stone of large alphabet probability modeling. Here, the goal is to address not only the unobserved events, but the entire (very) large collection of possible outcomes. Specifically, the large alphabet regime considers multinomial distributions in cases where $k$ is much larger than $n$ (or at least comparable to it). This problem too is highly important to data-driven science and engineering. Its applications span a variety of disciplines  including  information retrieval \cite{song1999general}, spelling correction \cite{church1991probability}, word-sense disambiguation \cite{gale1992method}, language modeling  \cite{chen1999empirical}, learning theory \cite{makur2020estimation} and many others. In this section we demonstrate the favorable properties of our proposed CI, as we apply it to large alphabet inference.  

There exists a large body of work on point estimation of large alphabet distributions. The GT probability estimator (based on the GT scheme described above) is perhaps the most popular estimator for this important task. While the GT estimator performs well in general, it is known to be sub-optimal for outcomes that frequently appear. Consequently, several modifications have been proposed, including the Jelinek-Mercer, Katz, Witten-Bell and Kneser-Ney estimators \cite{chen1999empirical}. In language modeling for example, GT is usually used to estimate the probability of infrequent words, whereas the probability of frequent words is estimated by their empirical frequency. Different properties of the GT probability estimator were extensively studied over the years \cite{mcallester2000convergence,drukh2005concentration,orlitsky2015competitive}. 
Despite this broad body of work, large alphabet inference has not received much attention. Current methods focus on two basic setups. The first considers an asymptotic regime, where $k$ is fixed the sample size $n$ is very large \cite{quesenberry1964large,goodman1964simultaneous}. The second line of work addresses a fixed $n$, where the alphabet size $k$ is relatively small. Here, the most popular SCI scheme is arguably of Sison and Glaz \cite{SisonGlaz1995}. In their work, Sison and Glaz (SG) proposed a method which utilizes Edgeworth expansions to approximate the desired distribution. Through extensive simulations, they showed that their method leads to smaller SCIs while maintaining a coverage rate closer to the desired level, compared to known methods at the time. Unfortunately, the SG scheme does not perform well in cases where the expected symbol counts are disparate \cite{MayJohnson1997Properties}. Recently, \cite{marton2022good} introduced a bootstrap framework for the case where both $k$ and $n$ are large. Yet, this approach is based on bootstrap sampling and does not provide solid theoretical guarantees. To the best of our knowledge, no method directly addresses the large alphabet regime, with provable performance guarantees. 

As above, let $\mathcal{X}$ be a countable alphabet. Here, we assume that the alphabet size $k$ is finite and known. Denote $p=p_1,\dots,p_k$ as the unknown probability distribution over $\mathcal{X}$, while $X^n$ is a collection of $n$ samples from  $\mathcal{X}$. Let $S(X^n)$ be an $(1-\alpha)$-level confidence region (CR) for $p$. That is,  
$P(p \in S(X^n))\geq 1-\alpha.$
The most popular form of a CR is the case where $S(X^n)$ is rectangular. That is, $S(X^n) = T_1(X^n), T_2(X^n), \dots, T_k(X^n)$, where $T_j(X^n)=[a_j,b_j]$ for $j=1,\dots,k$ and $0 \leq a_j\leq b_j\leq 1$. This implies SCIs for the collection of the $k$ parameters, similarly to (\ref{classical}). In fact, all the multinomial inference schemes mentioned above are rectangular CRs.

The most basic rectangular CR may be obtained from a binomial viewpoint. That is, one may construct a binomial CI for each symbol independently and correct for multiplicity using a Bonferroni correction. Hence, the obtained SCIs are just a collection of binomial CIs (of confidence level $\alpha/k$), for every symbol in the alphabet. Naturally, this approach controls the prescribed confidence level (for every $n$ and $k$), but may be over pessimistic and result in large volume SCIs. 
Notice that such an approach also applies for unobserved symbols. Specifically, a binomial CI for symbols with zero counts is obtained by a Bonferroni-corrected ROT, as described in Section \ref{Previous_Work}. 

Let us now introduce our proposed large alphabet inference scheme. We distinguish between observed and unobserved symbols. Assume that $n\leq k$ and let $\alpha_1,\alpha_2 \in [0,1]$ be two fixed constants. First, we set a binomial CI of level $1-\alpha_1/n$ for all the symbols that appear in the sample, while unobserved symbols are set a naive CI, $T_j(X^n)=[0,1]$. We have that
\begin{align}
    P(p \notin S(X^n))&=P(\cup_{j=1}^k\{p_j \notin T_j(X^n)\})\leq \sum_{j=1}^kP(p_j \notin T_j(X^n))=\\\nonumber
    &\sum_{j|N_j(X^n)=0}P(p_j \notin T_j(X^n))+\sum_{j|N_j(X^n)>0}P(p_j \notin T_j(X^n))=\\\nonumber
    &\sum_{j|N_j(X^n)>0}P(p_j \notin T_j(X^n))\leq n \cdot \frac{\alpha_1}{n}=\alpha_1
\end{align}
where the first inequality follows from the union bound and the second inequality is due to $|\{j|\;N_j(X^n)>0\}|\leq n$ (that is, the number of symbols that appear in the sample is not greater than the sample size). Notice that in the case where $n>k$, we may define binomial CI of level $1-\alpha_1/k$ for all the symbols that appear in the sample, and the above still holds. 

Now, let $A_{n}=\min_{r\geq 1}\left((q_{r,n}^*)^{r-1}(1-q_{r,n}^*)^n/\alpha_2  \right)^{1/r}$ be our proposed ($1-\alpha_2$)-level CI for unobserved events (Theorem \ref{theorem_analytical}). We would like to simultaneously control the events $M_{max}(X^n)\leq A_{n}$ and $p \in S(X^n)$ at a confidence level of $1-\alpha$. Therefore, we set $\alpha_1=\alpha(1-c)$ and $\alpha_2=\alpha c$ for some $c\in [0,1]$. We have, 
\begin{align}\label{eq2}
    &P\left(\left\{p \notin S(X^n)\}\right\}\cup\left\{M_{max}(X^n)\geq  A_n\right\}\right)\leq\\\nonumber
    &P(\cup_{j=1}^k\{p \notin S(X^n)\})+P(M_{max}(X^n)\geq A_n)\leq \alpha(1-c)+\alpha c=\alpha.
\end{align}
Notice that by simultaneously controlling both of the terms above, we may replace the naive unit intervals of the unobserved events with $[0,A_n]$. This implies the following scheme (Algorithm \ref{alg:Ours}) for constructing the desired SCIs.

\begin{algorithm}[H]
	\begin{algorithmic}[1]
		\renewcommand{\algorithmicrequire}{\textbf{Input:}}
		\renewcommand{\algorithmicensure}{\textbf{Output:}}
		\REQUIRE  A sample $X^n$, alphabet size $k$ and a confidence level $1-\alpha$.
		\STATE Set $c\in[0,1]$
		\STATE Construct a Binomial CI of level $1-\alpha(1-c)/n$ for all the symbols that appear in $X^n$ 
		\STATE Construct a CI for unobserved events (following Theorem $1$ or $2$) of level $1-\alpha c$, for all the symbols that do not appear in $X^n$
	\end{algorithmic}
	\caption{Our Proposed Large Alphabet SCI's for Multinomial Proportions}
	\label{alg:Ours}
\end{algorithm}

The scheme above introduces a simple analytical framework for constructing SCIs over large alphabets. The parameter $c$ defines an inference trade-off between observed and unobserved events. Specifically, for larger values of $k$ we expect many unobserved events which corresponds to a larger value of $c$. On the other hand, if $k$ is comparable to $n$, we would probably prefer a lower value of $c$. Therefore, choosing a reasonable value for $c$ is of a natural concern. Unfortunately, the choice of $c$ also depends on the unknown underlying distributions $p$. For example, a uniform $p$ results in fewer unobserved events than a degenerate $p$. Therefore, we cannot set a $c$ value that minimizes the SCIs uniformly, for every possible $p$. However, we  show it is possible to set a value for $c$, so that our proposed scheme  provably improves upon alternative methods.

Typically, the performance of a CR is measured by its expected volume. That is, given two CRs, we say the one outperforms the other if its expected volume is smaller, while maintaining the prescribed confidence level. However, notice that in the large alphabet regime, the volume of a CR rapidly decays with the alphabet size $k$. For example, the volume of a rectangular CR with a fixed length of $L<1$ for each of its parameters demonstrates an exponential decay, $L^k$. Therefore, we focus on the log of the volume in this regime. Specifically, in each of our following experiments we measure the average log volume, as we cannot directly assess the volume by numerical means. Further, for the same reasons, we focus on the expected log volume as our analytical figure of merit.  

\begin{theorem} \label{T3}
    Let $p$ be a probability distribution over an alphabet $\mathcal{X}$ of size $k$. Let $X^n$ be $n$ independent samples from $p$. Denote $A^{BC}_n=-\log(\alpha/k)/n$ as the Bonferroni-corrected CI for unobserved events.
    Let $A_{n,c}=\min_{r\geq 1}\left((q_{r,n}^*)^{r-1}(1-q_{r,n}^*)^n/\alpha c  \right)^{1/r}$ be our proposed CI, for a confidence level of $1-\alpha c$. Define $z_0=z_{1-{\alpha}/{2k}}$ and $z_c=z_{1-{\alpha(1-c)}/{2n}}$, where $z_a$ is the $a$ quantile of a standard normal distribution. Assume there exists $c\in[0,1]$ such that 
    \begin{enumerate}[label=(\alph*)]
        \item $k\left(1-\left(1-\frac{1}{k}\right)^n\right)(z_c-z_0)+k\left(1-\frac{1}{k}\right)^n(A_{n,c}-A^{BC}_n)\leq 0$
        \item $(z_c-z_0)+(k-1)(A_{n,c}-A^{BC}_n)\leq 0$
    \end{enumerate}
    Then, for every $p\in \Delta_k$ , the following (approximately) holds, $$\mathbb{E}\log V_c\leq \mathbb{E}\log V_0,$$ 
    where $V_c$ is the volume of our proposed CR (with a choice of $c$ that satisfies the above), $V_0$ is the volume of the Bonferroni-corrected CR and the approximation follows from Wald intervals for the Binomial proportions \cite{de1820theorie}.
\end{theorem}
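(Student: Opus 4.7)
The plan is to decompose the log-volume of each CR coordinate-wise, isolate the $c$-dependence using the Wald approximation, and reduce the desired inequality to an affine condition in the expected number of unobserved symbols.

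First, I would use the Wald approximation to write the length of the binomial interval for an observed symbol $j$ as $2z\sqrt{\hat p_j(1-\hat p_j)/n}$, with $z=z_c$ for the proposed CR (since the observed-coordinate level is $1-\alpha(1-c)/n$) and $z=z_0$ for the Bonferroni CR (level $1-\alpha/k$). The $\sqrt{\hat p_j(1-\hat p_j)/n}$ factor is identical in the two CRs, so it cancels in the difference of log-volumes. For an unobserved symbol the length is $A_{n,c}$ (proposed) or $A_n^{BC}$ (Bonferroni). Writing $N_U(X^n)$ for the number of unobserved symbols and $N_O(X^n) = k - N_U(X^n)$, this yields
\[
\log V_c - \log V_0 = N_O(X^n)\log(z_c/z_0) + N_U(X^n)\log(A_{n,c}/A_n^{BC}).
\]

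Next, I would take expectations and use $\mathbb{E}[N_U(X^n)] = U(p) := \sum_{j=1}^k (1-p_j)^n$ to get
\[
\mathbb{E}[\log V_c - \log V_0] = (k-U(p))\log(z_c/z_0) + U(p)\log(A_{n,c}/A_n^{BC}).
\]
Applying the first-order expansion $\log(x/y) \approx x-y$ (consistent with the Wald-level approximation the theorem references), the right-hand side becomes an affine function of $U(p)$ with slopes $(z_c - z_0)$ in the observed coordinates and $(A_{n,c}-A_n^{BC})$ in the unobserved coordinates.

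Since $\mathbb{E}[\log V_c - \log V_0]$ is affine in $U(p)$, it is nonpositive for every $p \in \Delta_k$ iff it is nonpositive at the two extreme values of $U(p)$. Because $x \mapsto (1-x)^n$ is convex on $[0,1]$, the symmetric sum $U(p)$ is Schur-convex; hence it attains its minimum $k(1-1/k)^n$ at the uniform distribution and its maximum $k-1$ at a point-mass on a single symbol. Plugging $U(p) = k(1-1/k)^n$ into the affine expression yields condition $(a)$, and plugging $U(p) = k-1$ yields condition $(b)$, so together they are sufficient.

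The main obstacle is making the linearization $\log(x/y) \approx x-y$ precise --- a fully rigorous version of the theorem would state conditions (a) and (b) in terms of $\log(z_c/z_0)$ and $\log(A_{n,c}/A_n^{BC})$ rather than plain differences, with the approximate form recovered by a Taylor expansion around $z_c \approx z_0$ and $A_{n,c}\approx A_n^{BC}$. A secondary subtlety is that $V_c$ depends on the random $\hat p_j$'s through the $\sqrt{\hat p_j(1-\hat p_j)/n}$ factors; these cancel exactly in $\log V_c - \log V_0$ but would not cancel if one instead compared $\log \mathbb{E}V_c$ to $\log \mathbb{E} V_0$, so working with $\mathbb{E}\log V$ (as the theorem does) is essential.
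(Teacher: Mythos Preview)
Your proposal is correct and follows essentially the same route as the paper's proof: decompose $\mathbb{E}\log V$ coordinate-wise using the Wald form, observe that the $\sqrt{\hat p_j(1-\hat p_j)/n}$ factor cancels in the difference so that the observed-coordinate contribution reduces to $\log(z_c/z_0)$, obtain an expression affine in $U(p)=\sum_u(1-p_u)^n$, and then check the two endpoints $U(p)\in\{k(1-1/k)^n,\,k-1\}$ to recover conditions~(a) and~(b). Your identification of the linearization $\log(x/y)\approx x-y$ as the unproven step is apt---the paper's proof ends with conditions phrased in terms of $\log z_c-\log z_0$ and $\log L_0^c-\log L_0^{BC}$ and then simply asserts these are ``as stated in conditions~(a) and~(b)'' without spelling out that passage; you are more explicit about where the ``approximately'' in the theorem statement actually enters. (As a minor aside, your Schur-convexity argument for the extremes of $U(p)$ is correct; the paper's text swaps the words ``maximum'' and ``minimum'' though its inequality $k(1-1/k)^n\le U(p)\le k-1$ is the right one.)
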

Theorem \ref{T3} establishes an important property of our proposed CR. Given a sample size $n$ and an alphabet size $k$, we seek a constant $c\in[0,1]$ that satisfies the (a) and (b). This  requires a simple grid search over the unit interval. Assuming we find such $c$, then we are guaranteed that Algorithm \ref{alg:Ours} outperforms the Bonferroni-corrected CR, for every $p \in \Delta_k$. The proof of Theorem \ref{T3} is located in Appendix D. 

\subsection{Large Alphabet Experiments}
Let us now demonstrate the performance of our suggested inference scheme. We focus on two benchmark distributions which represent two extreme cases. Specifically, we study the heavy-tailed Zipf's law distribution (with $s=1.01$) and the benchmark uniform distribution. In each experiment we draw $n=1000$ samples, and evaluate the 
log-volume of different CRs (for $\alpha=0.05$), as $k$ increases. We repeat this process $1000$ times to obtain an averaged log-volume. We focus on the Bonferroni-corrected CR (denoted BC in the figures that follow), the Sison-Glaz (SG) scheme \cite{sison1995simultaneous} and our proposed method (Algorithm \ref{alg:Ours}). To configure our method, we set $c$ as the largest value within the unit interval that satisfies conditions (a) and (b), for every $k$ and $n$. We justify this choice later in this section. Figure \ref{log_volume} demonstrates the results we achieve. 
\begin{figure}[ht]
\centering
\includegraphics[width =0.47\textwidth,bb=70 180 530 600,clip]{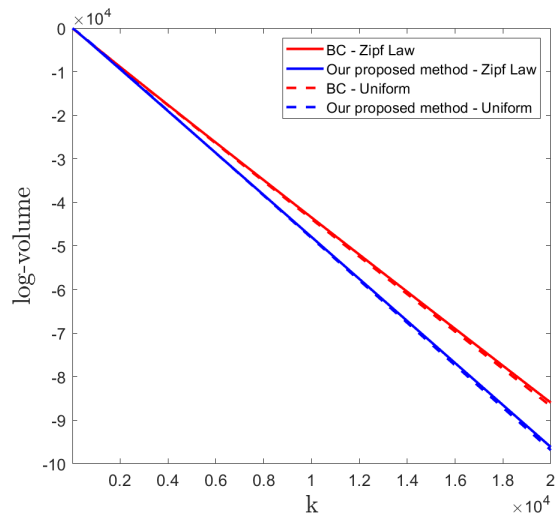}
\caption{log-volume of CR for Zipf's Law and Uniform distributions, for $n=1000$.}
\label{log_volume}
\end{figure}

First, it is evident that our proposed scheme outperforms the Bonferroni-corrected CR as $k$ grows. The SG method is omitted from Figure \ref{log_volume} as it fails to provide the prescribed confidence level (see Appendix E). It may appear from Figure \ref{log_volume} that the difference between the Zipf's Law and the uniform distribution is negligible. The reason for this phenomenon is fairly simple. For $k>>n$, most symbols do not appear in the sample (regardless to the underlying distribution). In this case, the volume of the CR is dominated by the CI of the unobserved events. This CI is fixed and independent of the sample, for both inference schemes. 
On the other hand, there is a difference in the log-volume for smaller alphabets. However, it is less visible from Figure \ref{log_volume}, and demonstrated more clearly in Figure \ref{simulated_exp_c} below. To complete the picture, we examine the coverage rate of the examined inference schemes. The results are reported in Appendix E for brevity.  As we can see, both the Bonferroni-corrected and our proposed method obtain the prescribed $0.95$ confidence level as desired, while SG fails to do so.   

Finally, we examine the performance (and sensitivity) of our suggested scheme for the choice of $c$. The upper charts of Figure \ref{simulated_exp_c} correspond to a Zipf's Law distribution ($s=1.01)$ with $k=1000$ (right) and $k=20000$ (left). The lower charts correspond to a uniform distribution with the same alphabet sizes. We use $n=1000$ samples as above. First, it is evident that for large $k$, the performance of our proposed scheme improves as $c$ grows. This is not quite surprising as there are more unobserved events in this setup. For a relatively smaller $k$, we still observe a significant improvement over the Bonferroni-corrected scheme for a large span of $c$ values, for both distributions. Despite the above, we emphasize that any choice of $c$ that satisfies conditions (a) and (b) is guaranteed to improve upon the Bonferroni-corrected scheme. Therefore, for simplicity, we choose the largest possible $c$ so that the improvement is more evident for larger alphabets.   

\begin{figure}[ht]
\centering
\includegraphics[width =0.58\textwidth,bb=30 110 550 680,clip]{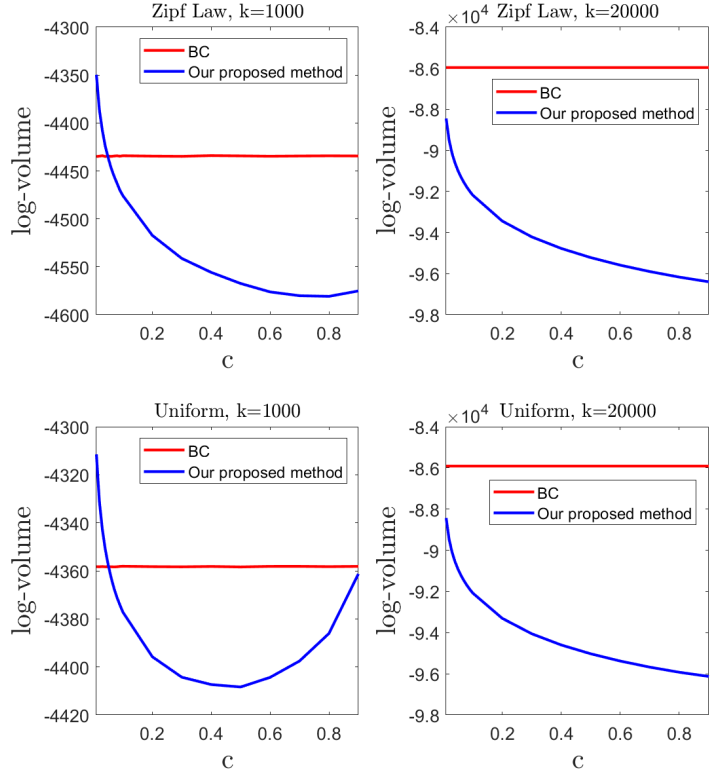}
\caption{The choice of $c$ in Zipf's Law and Uniform setups. The sample size is $n=1000$.}
\label{simulated_exp_c}
\end{figure}

\section{Discussion}
In this work we introduce an interval estimation framework for the probability of symbols that do not appear in the sample. Our suggested framework is an SoS inference scheme, designed to simultaneously control the selected parameters. We distinguish between two setups, depending on the alphabet size. First, we 
consider the case where the alphabet size $k$ is unknown and possible unbounded. This setup is of special interest in many real-world applications, as described throughout the manuscript. 
We introduce a closed-form expression for the CI, which is independent of the alphabet size. Second, we study the case where the alphabet size in known (or bounded). Here, we derive an efficient numerical routine which improves upon the unbounded $k$ solution in cases where $k$ is relatively small. It is important to emphasize that in both setups, the proposed CI is independent of the sample, similarly to the ROT. This makes it a robust framework, which is easy to apply. Next, we show that our proposed CIs are (almost) tight, in the sense that there exists a probability distribution $p$, for which we cover the missing probabilities at a confidence level (almost) equal to $1-\alpha$. We compare our proposed scheme to currently known methods, showing  significant improvement in synthetic and real-world experiments. Finally, we apply our proposed CI to large alphabet inference. Specifically, we introduce a novel scheme that provably improves upon the alternatives while controlling the desired coverage rate. 

To conclude, we revisit the motivating example in Section \ref{intro}. The $2006$ New Zealand Ministry of Health report indicated $58$ new cancer cases among Maori female under the age of $30$. Specifically, out of the $75$ studied cancer types, only $37$ were observed. Using the Bonferroni-corrected ROT, we obtain a confidence interval of $[0,0.126]$ for the unobserved cancer types. Applying our proposed scheme (under the more robust unbounded $k$ assumption), we obtain a shorter CI of length $[0,0.089]$. Similarly, for Pacific Islands men of the same age group, an additional part of the report indicated only $11$ new cancer cases in $2006$, where each case is of a different type. Here, the Bonferroni-corrected ROT suggests a CI of $[0,0.244]$ while our proposed scheme obtains a CI of $[0,0.15]$. As we can see, our interval estimation scheme demonstrates a significant improvement. This makes it an favorable alternative for this important problem, which applies to many applications.

Finally, our proposed framework may be generalized to consider the collection of probabilities with $i$ appearances in the sample. This would allow us to control more events of interest, and further improve out large alphabet inference scheme. We consider this direction for our future work.

\begin{appendices}
\section{A Proof for Property \ref{property1}}

Let $r\geq 1$ and  $h(t)=t^r(1-t)^n$. Then, the optimum of $h(t)$ satisfies
\begin{align}
    \frac{dh(t)}{dt}=t^{r-1}(1-t)^{n-1}(r(1-t)-nt)=0
\end{align}
This implies that $t^*\triangleq r/(r+n)$ is a local optimum. Further, 
\begin{align}\nonumber
    \frac{d^2h(t)}{dt^2}=&t^{r-2}(1-t)^{n-2}\bigg(r(r-1)(1-t)^2-2nrt(1-t)+n(n-1)t^2\bigg)=\\\nonumber
    &t^{r-2}(1-t)^{n-2}\bigg(t^2\big(r(r-1)+2nr+n(n-1)\big)+t\big(-2r(r-1)-2nr\big)+r(r-1)\bigg).
\end{align}
Denote the roots of the quadratic form 
$$z(t)=t^2\big(r(r-1)+2nr+n(n-1)\big)+t\big(-2r(r-1)-2nr\big)+r(r-1)$$
as $t_1$ and $t_2$. Simple calculus shows that 
$$t_{1,2}=t^*\pm \frac{1}{r+n}\sqrt{\frac{rn}{r+n-1}}.$$

As we can, $z(t)$ is quadratic and convex in $t$. This means that $z(t)<0$ for $t_1<t<t_2$ and $z(t)>0$ elsewhere. This implies that $h(t)$ is concave for $t_1<t<t_2$, and convex for $0\leq t\leq t_1$ and $t_2\leq t \leq 1$. Further, $h(t^*)<0$ which implies that $t^*$ is a local maximum. 

\section{A Proof for Theorem \ref{theorem2}}

We prove Theorem \ref{theorem2} by a series of properties. 

\begin{property}\label{property2}
Let $p^* \in \Delta_k$ be the maximizer of $E_{r,n}(p)=\sum_u h(p(u))$ where $h(p(u))\triangleq p^r(u)(1-p(u))^n$. Then, $p^*(u)=p^*(v)$ for all $p^*(u),p^*(v)\in \left[t_1,t_2\right]$.
\end{property}

\begin{proof}
By negation, assume there exists $p^*(u) \neq p^*(v)$ such that $p^*(u),p^*(v)\in \left[t_1,t_2\right]$. Define 
\begin{equation}
   \tilde{p}(l)=
    \begin{cases}
      p^*(l) & l\neq u,v\\
      \frac{p^*(u)+p^*(v)}{2} & l=u,v    \end{cases}
\end{equation}
Then, 
\begin{align}
    \sum_l h(\tilde{p}(l))=&\sum_{l \neq u,v}  h(\tilde{p}(l))+\sum_{l = u,v}  h(\tilde{p}(l))=\sum_{l \neq u,v}  h(p^*(l))+2h\left(\frac{p^*(u)+p^*(v)}{2}\right)>\\\nonumber
    &\sum_{l \neq u,v}  h(p^*(l))+h(p^*(u))+h(p^*(v))=\sum_u h(p^*(u)).
\end{align}
where the inequality follows from the concavity of $h(p(l))$ for every $p(l) \in \left[t_1,t_2\right]$. Therefore, we found $\tilde{p}\in \Delta_k$ for which $\sum_l h(\tilde{p}(l))>\sum_l h(p^*(l))$, which contradicts the optimality of $p^*$. 
\end{proof}

\begin{property}\label{property4}
Let $p^* \in \Delta_k$ be the maximizer of $E_{r,n}(p)=\sum_u h(p(u))$, where $h(p(u))\triangleq p^r(u)(1-p(u))^n$. Then, there exists at most a single $p^*(u)$ such that $p^*(u)\in \left(0,t_1\right)$.
\end{property}
\begin{proof}
By negation, assume there exist $p^*(u)$ and $p^*(v)$ such that $p^*(u),p^*(v)\in \left(0,t_1\right)$. Assume, without loss of generality, that  $p^*(v)\leq p^*(u)$. Define $\delta= p^*(v)>0$. 

Let us first assume that $p^*(u)+\delta<t_1$. The function  $h(p(u))$ is convex for $p(u) \in \left[0,t_1\right]$ and strictly convex for $p(u) \in \left[0,t_1\right)$. Therefore, we have

\begin{align}
    &h\left(p^*(u)+\delta\right)> h\left(p^*(u)\right)+\delta h'\left(p^*(u)\right)\\\nonumber
    &h\left(p^*(v)-\delta\right)\geq h\left(p^*(v)\right)-\delta h'\left(p^*(v)\right).
\end{align}
where $h'(p(u))={dh(p(u))}/{dp(u)}$. Putting together the above, we have 
\begin{align}
    h\left(p^*(u)+\delta\right)+&h\left(p^*(v)-\delta\right)>h\left(p^*(u)\right)+h\left(p^*(v)\right)+\delta\left(h'\left(p^*(u)\right)-h'\left(p^*(v)\right)\right).
\end{align}
We observe that $h'(p(u))$ is an increasing function in $p(u)$, for $p(u)\in \left(0,t_1\right)$, as its derivative, ${d^2 h(p(u))}/{dp^2(u)}$ is positive in this range. Therefore, $h'\left(p^*(u)\right)\geq h'\left(p^*(v)\right)$ and 
\begin{align}
    h\left(p^*(u)+\delta\right)+h\left(p^*(v)-\delta\right)> h\left(p^*(u)\right)+h\left(p^*(v)\right).
\end{align}
Therefore, we found $\tilde{p}\in \Delta_k$ such that 
\begin{equation}
   \tilde{p}(l)=
    \begin{cases}
      p^*(l) & l\neq u,v\\
      0 & l=v\\
      p^*(u)+\delta & l=u\\
      \end{cases}
\end{equation}
and $\sum_l h(\tilde{p}(l))> \sum_l h(p^*(l)) $, which contradicts the optimality of $p^*$.

Now, assume that $p^*(u)+\delta\geq t_1$. Then, define $\tilde{\delta}=t_1-p^*(u)>0$. We have
\begin{align}
    h\left(p^*(u)+\tilde{\delta}\right)\geq h\left(p^*(u)\right)+\tilde{\delta} h'\left(p^*(u)\right)
\end{align}
\begin{align}
    h\left(p^*(v)-\tilde{\delta}\right)> h\left(p^*(v)\right)-\tilde{\delta} h'\left(p^*(v)\right).
\end{align}
Putting together the above, we have 
\begin{align}
    h\left(p^*(u)+\tilde{\delta}\right)+&h\left(p^*(v)-\tilde{\delta}\right)>h\left(p^*(u)\right)+h\left(p^*(v)\right)+\tilde{\delta}\left(h'\left(p^*(u)\right)-h'\left(p^*(v)\right)\right).\nonumber
\end{align}
As above, we observe that $h'(p(u))$ is an increasing function in $p(u)$, for $p(u)\in \left(0,t_1\right)$. Therefore, $h'\left(p^*(u)\right)\geq h'\left(p^*(v)\right)$ and 
\begin{align}
    h\left(p^*(v)-\tilde{\delta}\right)+h\left(p^*(u)+\tilde{\delta}\right)> h\left(p^*(v)\right)+h\left(p^*(u)\right).
\end{align}
Therefore, we found $\tilde{p}\in \Delta_k$ such that 
\begin{equation}
   \tilde{p}(l)=
    \begin{cases}
      p^*(l) & l\neq u,v\\
      p^*(v)-\tilde{\delta} & l=v\\
      t_1 & l=u\\
      \end{cases}
\end{equation}
and $\sum_l h(\tilde{p}(l))> \sum_l h(p^*(l)) $, which again contradicts the optimality of $p^*$.
\end{proof}

\begin{property}\label{property3}
Let $p^* \in \Delta_k$ be the maximizer of $E_{r,n}(p)=\sum_u h(p(u))$, where $h(p(u))\triangleq p^r(u)(1-p(u))^n$. Then, there exists at most a single $p^*(u)$ such that $p^*(u)\in \left(t_2,1\right]$.
\end{property}
\begin{proof}
By negation, assume there exist $p^*(u)$ and $p^*(v)$ such that $p^*(u),p^*(v)\in \left(t_2,1\right]$. Assume, without loss of generality, that  $p^*(v)\leq p^*(u)$. Define $\delta= p^*(v)-t_2>0$. The function  $h(p(u))$ is convex for $p(u) \in \left[t_2,1\right]$ and strictly convex for $p(u) \in \left(t_2,1\right]$. Therefore, we have
\begin{align}
    h\left(t_2\right)\geq h\left(p^*(v)\right)-\delta h'\left(p^*(v)\right)
\end{align}
\begin{align}
    h\left(p^*(u)+\delta\right)> h\left(p^*(u)\right)+\delta h'\left(p^*(u)\right)
\end{align}

Putting together the above, we have 
\begin{align}
    h\left(t_2\right)+&h\left(p^*(u)+\delta\right)> h\left(p^*(v)\right)+h\left(p^*(u)\right)+\delta\left(h'\left(p^*(u)\right)-h'\left(p^*(v)\right)\right).
\end{align}
We observe that $h'(p(u))$ is an increasing function in $p(u)$, for $p(u)\in \left(t_2,1\right]$, as its derivative, ${d^2 h(p(u))}/{dp^2(u)}$ is positive in this range. Therefore, $h'\left(p^*(u)\right)\geq h'\left(p^*(v)\right)$ and 
\begin{align}
    h\left(t_2\right)+h\left(p^*(u)+\delta\right)> h\left(p^*(v)\right)+h\left(p^*(u)\right).
\end{align}
Therefore, we define $\tilde{p}\in \Delta_k$ such that 
\begin{equation}
   \tilde{p}(l)=
    \begin{cases}
      p^*(l) & l\neq u,v\\
      p^*(l)-\delta & l=v\\
      p^*(l)+\delta & l=u\\
      \end{cases}
\end{equation}
and $\sum_l h(\tilde{p}(l))> \sum_l h(p^*(l)) $, which contradicts the optimality of $p^*$.\end{proof}

\section{A Proof for Proposition \ref{prop2}}
Let $X^n$ be a sample of $n$ independent observations from $p_m \in \mathcal{U}_k$. This means that $M_{max}(X^n)={0,1/m}$, and $M_max=1/m$ if and only if there exists at least one symbol that do not appear in the sample, where all symbols are equiprobable. Therefore, the probability that $M_max=1/m$ equals the probability of placing $n$ balls in $m$ identical bins, where at least a single bin remains empty. Equivalently, 

\begin{align}\label{worst-case_appendix}
   P\left(M_{max}(X^n) = \frac{1}{m}\right)=1-\frac{m!S(n,m)}{m^n},
   \end{align}
where $m!S(n,m)$ is the number of combinations of placing $n$ distinguishable balls in $m$ distinguishable bins, where no bin is empty, and $m^n$ is the total number of combinations of placing $n$ distinguishable balls into $m$ distinguishable bins \cite{graham1989concrete}.  
\section{A proof for Theorem \ref{T3}}\label{T3_appendix}
\begin{proof}
   Let $L_i$ be the length of the CI for a symbol that appears $i$ times in the sample. The Bonferroni-corrected CR satisfies
\begin{align}
    L^{BC}_0=-\log(\alpha/k)/n\quad,\quad L^{BC}_i=2z_{1-\frac{\alpha}{2k}}\sqrt{\frac{i/n(1-i/n)}{n}}\;\; \forall i>0.
\end{align}
Notice that we use a normal approximation for the binomial CI to simplify our derivation. Next, given a fixed $c \in [0,1]$, the CI of our proposed method satisfies 
\begin{align}\nonumber
    L^{c}_0=\min_{r\geq 1}\left((q_{r,n}^*)^{r-1}(1-q_{r,n}^*)^n/(c\alpha)  \right)^{1/r}\quad,\quad L^{c}_i=2z_{1-\frac{\alpha(1-c)}{2n}}\sqrt{\frac{i/n(1-i/n)}{n}}\;\; \forall i>0,
\end{align}
where $q_{r,n}^*$ is defined in Theorem $1$. Notice that for simplicity, we use the result in Theorem $1$ although the alphabet size $k$ is known. The expected log-volume of a rectangular CR satisfies
\begin{align}
    \mathbb{E}\log V= \mathbb{E}\log \prod_{i=0}^nL_i^{\sum_u\mathbbm{1}(N_u(X^n)=i)}=\sum_{i=0}^n\sum_u\mathbb{E}\left(\mathbbm{1}(N_u(X^n)=i)\right)\log L_i.
\end{align}
We would like to find $c\in [0,1]$ such that $\mathbb{E}\log V_c\leq \mathbb{E}\log V_0$. We have,
\begin{align}
    \mathbb{E}\log V_c-\mathbb{E}\log V_0=&\sum_{i=0}^n\sum_u\mathbb{E}\left(\mathbbm{1}(N_u(X^n)=i)\right)\left(\log L_i^c-\log L_i^{BC}\right)=\\\nonumber
    &\sum_u\mathbb{E}\left(\mathbbm{1}(N_u(X^n)=0)\right)\left(\log L_0^c-\log L_0^{BC}\right)+\\\nonumber
    &\sum_{i=1}^n\sum_u\mathbb{E}\left(\mathbbm{1}(N_u(X^n)=i)\right)\left(\log L_i^c-\log L_i^{BC}\right)=\\\nonumber
    &\sum_u\mathbb{E}\left(\mathbbm{1}(N_u(X^n)=0)\right)\left(\log L_0^c-\log L_0^{BC}\right)+\\\nonumber
    &\left(\log z_{1-\frac{\alpha(1-c)}{2n}}-\log z_{1-\frac{\alpha}{2k}}\right)\sum_{i=1}^n\sum_u\mathbb{E}\left(\mathbbm{1}(N_u(X^n)=i)\right)=\\\nonumber
    &\sum_u\mathbb{E}\left(\mathbbm{1}(N_u(X^n)=0)\right)\left(\log L_0^c-\log L_0^{BC}\right)+\\\nonumber
    &\left(\log z_{1-\frac{\alpha(1-c)}{2n}}-\log z_{1-\frac{\alpha}{2k}}\right)\left(k-\sum_u\mathbb{E}\left(\mathbbm{1}(N_u(X^n)=0)\right)\right),
\end{align}
where the last equality follows from $\sum_{i=0}^n \mathbbm{1}(N_u(X^n)=i)=k$. Notice we have that $\mathbb{E}\left(\mathbbm{1}(N_u(X^n)=0)\right)=(1-p_u)^n$. Therefore, 
\begin{align}\label{eq1a}
    \mathbb{E}\log V_c-\mathbb{E}\log V_0=&\sum_u(1-p_u)^n\left(\log L_0^c-\log L_0^{BC}\right)+\\\nonumber
    &\left(k-\sum_u(1-p_u)^n\right)\left(\log z_{1-\frac{\alpha(1-c)}{2n}}-\log z_{1-\frac{\alpha}{2k}}\right).
\end{align}
Notice that (\ref{eq1a}) is linear in $\sum_u(1-p_u)^n$. Further, simple calculus shows that $\sum_u(1-p_u)^n$ attains its maximum for a uniform distribution, while its minimum is attained for a degenerate distribution. Therefore, $k(1-1/k)^n\leq \sum_u(1-p_u)^n\leq k-1$. This means that 
\begin{align}
    &\mathbb{E}\log V_c-\mathbb{E}\log V_0\leq\\\nonumber
    &\max\bigg\{  k(1-1/k)^n\left(\log L_0^c-\log L_0^{BC}\right)+k\left(1-(1-1/k)^n\right)\left(\log z_{1-\frac{\alpha(1-c)}{2n}}-\log z_{1-\frac{\alpha}{2k}}\right),\\\nonumber
    &\quad\quad\quad (k-1)\left(\log L_0^c-\log L_0^{BC}\right)+\left(\log z_{1-\frac{\alpha(1-c)}{2n}}-\log z_{1-\frac{\alpha}{2k}}\right)\bigg\}.
\end{align}
We require that $\mathbb{E}\log V_c-\mathbb{E}\log V_0 \leq0$. This holds if both arguments of the max above are non positive, as stated in conditions (a) and (b). 
\end{proof}

\newpage

\section{Coverage Rate for large Alphabet SCIs}

\begin{figure}[ht]
\centering
\includegraphics[width =0.45\textwidth,bb=50 150 530 620,clip]{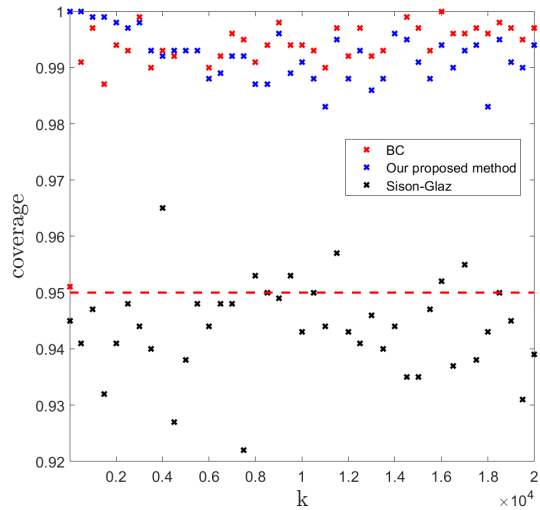}
\caption{CR coverage for a Zipf's Law distribution. The sample size is $n=1000$.}
\label{simulated_exp_coverage}
\end{figure}

\end{appendices}

\section*{Acknowledgements}
This research is supported by the Israel Science Foundation 
grant number 963/21. The author thanks Ruth Heller and Yoav Benjamini for helpful discussions.

\bibliographystyle{plain}
\bibliography{bibi}  

\begin{thebibliography}{10}

\bibitem{acharya2018improved}
Jayadev Acharya, Yelun Bao, Yuheng Kang, and Ziteng Sun.
\newblock Improved bounds for minimax risk of estimating missing mass.
\newblock In {\em IEEE International Symposium on Information Theory}, pages
  326--330, 2018.

\bibitem{ayed2019good}
Fadhel Ayed, Marco Battiston, Federico Camerlenghi, and Stefano Favaro.
\newblock A good-turing estimator for feature allocation models.
\newblock {\em Electronic Journal of Statistics}, 13(2):3775--3804, 2019.

\bibitem{bachmann1894analytische}
Paul Bachmann.
\newblock {\em Die analytische zahlentheorie}, volume~2.
\newblock Teubner, 1894.

\bibitem{battiston2020consistent}
Marco Battiston, Fadhel Ayed, Federico Camerlenghi, and Stefano Favaro.
\newblock On consistent and rate optimal estimation of the missing mass.
\newblock In {\em Annales de l'institut Henri Poincare (B) Probability and
  Statistics}, 2020.

\bibitem{ben2017concentration}
Anna Ben-Hamou, St{\'e}phane Boucheron, Mesrob~I Ohannessian, et~al.
\newblock Concentration inequalities in the infinite urn scheme for occupancy
  counts and the missing mass, with applications.
\newblock {\em Bernoulli}, 23(1):249--287, 2017.

\bibitem{benjamini2019confidence}
Yoav Benjamini, Yotam Hechtlinger, and Philip~B Stark.
\newblock Confidence intervals for selected parameters.
\newblock {\em arXiv preprint arXiv:1906.00505}, 2019.

\bibitem{benjamini2005false}
Yoav Benjamini and Daniel Yekutieli.
\newblock False discovery rate--adjusted multiple confidence intervals for
  selected parameters.
\newblock {\em Journal of the American Statistical Association},
  100(469):71--81, 2005.

\bibitem{berk2013valid}
Richard Berk, Lawrence Brown, Andreas Buja, Kai Zhang, and Linda Zhao.
\newblock Valid post-selection inference.
\newblock {\em The Annals of Statistics}, pages 802--837, 2013.

\bibitem{us2014frequently}
US~Census Bureau.
\newblock Frequently occurring surnames from the census 2000.
\newblock 2014.

\bibitem{chandra2021estimation}
Prafulla Chandra and Andrew Thangaraj.
\newblock Estimation and concentration of missing mass of functions of discrete
  probability distributions.
\newblock {\em arXiv preprint arXiv:2110.01968}, 2021.

\bibitem{chen1999empirical}
Stanley~F Chen and Joshua Goodman.
\newblock An empirical study of smoothing techniques for language modeling.
\newblock {\em Computer Speech \& Language}, 13(4):359--394, 1999.

\bibitem{church1991probability}
Kenneth~W Church and William~A Gale.
\newblock Probability scoring for spelling correction.
\newblock {\em Statistics and Computing}, 1(2):93--103, 1991.

\bibitem{de1820theorie}
Pierre~Simon de~Laplace.
\newblock {\em Th{\'e}orie analytique des probabilit{\'e}s}.
\newblock Courcier, 1820.

\bibitem{drukh2005concentration}
Evgeny Drukh and Yishay Mansour.
\newblock Concentration bounds for unigram language models.
\newblock {\em Journal of Machine Learning Research}, 6(Aug):1231--1264, 2005.

\bibitem{efron1976estimating}
Bradley Efron and Ronald Thisted.
\newblock Estimating the number of unseen species: How many words did
  {S}hakespeare know?
\newblock {\em Biometrika}, 63(3):435--447, 1976.

\bibitem{favaro2012new}
Stefano Favaro, Antonio Lijoi, and Igor Pr{\"u}nster.
\newblock A new estimator of the discovery probability.
\newblock {\em Biometrics}, 68(4):1188--1196, 2012.

\bibitem{favaro2016rediscovery}
Stefano Favaro, Bernardo Nipoti, and Yee~Whye Teh.
\newblock Rediscovery of good--turing estimators via bayesian nonparametrics.
\newblock {\em Biometrics}, 72(1):136--145, 2016.

\bibitem{gale1994s}
William Gale and Kenneth Church.
\newblock What’s wrong with adding one.
\newblock {\em Corpus-Based Research into Language: In honour of Jan Aarts},
  pages 189--200, 1994.

\bibitem{gale1992method}
William Gale, Kenneth Church, and David Yarowsky.
\newblock A method for disambiguating word senses in a large corpus.
\newblock {\em Computers and the Humanities}, 26(5-6):415--439, 1992.

\bibitem{gao2013moderate}
Fuqing Gao et~al.
\newblock Moderate deviations for a nonparametric estimator of sample coverage.
\newblock {\em The Annals of Statistics}, 41(2):641--669, 2013.

\bibitem{gao2007molecular}
Zhan Gao, Chi-hong Tseng, Zhiheng Pei, and Martin~J Blaser.
\newblock Molecular analysis of human forearm superficial skin bacterial biota.
\newblock {\em Proceedings of the National Academy of Sciences},
  104(8):2927--2932, 2007.

\bibitem{good1953population}
Irving~J Good.
\newblock The population frequencies of species and the estimation of
  population parameters.
\newblock {\em Biometrika}, 40(3-4):237--264, 1953.

\bibitem{goodman1964simultaneous}
Leo~A Goodman et~al.
\newblock Simultaneous confidence intervals for contrasts among multinomial
  populations.
\newblock {\em The Annals of Mathematical Statistics}, 35(2):716--725, 1964.

\bibitem{graham1989concrete}
Ronald~L Graham, Donald~E Knuth, Oren Patashnik, and Stanley Liu.
\newblock Concrete mathematics: a foundation for computer science.
\newblock {\em Computers in Physics}, 3(5):106--107, 1989.

\bibitem{katsevich2020simultaneous}
Eugene Katsevich and Aaditya Ramdas.
\newblock Simultaneous high-probability bounds on the false discovery
  proportion in structured, regression and online settings.
\newblock {\em The Annals of Statistics}, 48(6):3465--3487, 2020.

\bibitem{laplace2012pierre}
Pierre-Simon Laplace.
\newblock {\em Pierre-Simon Laplace Philosophical Essay on Probabilities:
  Translated from the fifth French edition of 1825 With Notes by the
  Translator}, volume~13.
\newblock Springer Science \& Business Media, 1825.

\bibitem{lee2016exact}
Jason~D Lee, Dennis~L Sun, Yuekai Sun, and Jonathan~E Taylor.
\newblock Exact post-selection inference, with application to the lasso.
\newblock {\em The Annals of Statistics}, 44(3):907--927, 2016.

\bibitem{lijoi2007bayesian}
Antonio Lijoi, Rams{\'e}s~H Mena, and Igor Pr{\"u}nster.
\newblock Bayesian nonparametric estimation of the probability of discovering
  new species.
\newblock {\em Biometrika}, 94(4):769--786, 2007.

\bibitem{maddox1988elements}
Ivor~John Maddox.
\newblock {\em Elements of functional analysis}.
\newblock CUP Archive, 1988.

\bibitem{makur2020estimation}
Anuran Makur, Gregory~W Wornell, and Lizhong Zheng.
\newblock On estimation of modal decompositions.
\newblock In {\em 2020 IEEE International Symposium on Information Theory
  (ISIT)}, pages 2717--2722. IEEE, 2020.

\bibitem{marton2022good}
Daniel Marton and Amichai Painsky.
\newblock Good-{B}ootstrap: Simultaneous confidence intervals for large
  alphabet distributions.
\newblock 2022.

\bibitem{MayJohnson1997Properties}
Warren~L. May and William~D Johnson.
\newblock Properties of simultaneous confidence intervals for multinomial
  proportions.
\newblock {\em Communications in Statistics - Simulation and Computation},
  26(2):495--518, 1997.

\bibitem{mcallester2000convergence}
David~A McAllester and Robert~E Schapire.
\newblock On the convergence rate of {G}ood-{T}uring estimators.
\newblock In {\em COLT}, pages 1--6, 2000.

\bibitem{mossel2019impossibility}
Elchanan Mossel and Mesrob Ohannessian.
\newblock On the impossibility of learning the missing mass.
\newblock {\em Entropy}, 21(1):28, 2019.

\bibitem{NZ_data}
New zealand health information service. cancer: new registrations and deaths
  2006.
\newblock Wellington: Ministry of Health; 2010.

\bibitem{subtitles1}
open-source subtitles.
\newblock \url{https://www.opensubtitles.org/}.

\bibitem{subtitles2}
opensubtitles.
\newblock \url{https://invokeit.wordpress.com/frequency-word-lists/}.

\bibitem{orlitsky2003always}
Alon Orlitsky, Narayana~P Santhanam, and Junan Zhang.
\newblock Always {G}ood {T}uring: Asymptotically optimal probability
  estimation.
\newblock {\em Science}, 302(5644):427--431, 2003.

\bibitem{orlitsky2015competitive}
Alon Orlitsky and Ananda~Theertha Suresh.
\newblock Competitive distribution estimation: Why is {G}ood-{T}uring good.
\newblock In {\em Advances in Neural Information Processing Systems}, pages
  2143--2151, 2015.

\bibitem{orlitsky2016optimal}
Alon Orlitsky, Ananda~Theertha Suresh, and Yihong Wu.
\newblock Optimal prediction of the number of unseen species.
\newblock {\em Proceedings of the National Academy of Sciences},
  113(47):13283--13288, 2016.

\bibitem{painsky2021refined}
Amichai Painsky.
\newblock Refined convergence rates of the {G}ood-{T}uring estimator.
\newblock In {\em 2021 IEEE Information Theory Workshop (ITW)}, pages 1--5.
  IEEE, 2021.

\bibitem{painsky2022convergence}
Amichai Painsky.
\newblock Convergence guarantees for the {G}ood-{T}uring estimator.
\newblock {\em Journal of Machine Learning Research}, 23(279):1--37, 2022.

\bibitem{painsky2022data}
Amichai Painsky.
\newblock A data-driven missing mass estimation framework.
\newblock In {\em 2022 IEEE International Symposium on Information Theory
  (ISIT)}, pages 2991--2995. IEEE, 2022.

\bibitem{painsky2022generalized}
Amichai Painsky.
\newblock Generalized {G}ood-{T}uring improves missing mass estimation.
\newblock {\em Journal of the American Statistical Association}, pages 1--10,
  2022.

\bibitem{quesenberry1964large}
Charles~P Quesenberry and DC~Hurst.
\newblock Large sample simultaneous confidence intervals for multinomial
  proportions.
\newblock {\em Technometrics}, 6(2):191--195, 1964.

\bibitem{rajaraman2017minimax}
Nikhilesh Rajaraman, Andrew Thangaraj, and Ananda~Theertha Suresh.
\newblock Minimax risk for missing mass estimation.
\newblock In {\em IEEE International Symposium on Information Theory}, pages
  3025--3029, 2017.

\bibitem{saichev2009theory}
Alexander~I Saichev, Yannick Malevergne, and Didier Sornette.
\newblock {\em Theory of Zipf's law and beyond}, volume 632.
\newblock Springer Science \& Business Media, 2009.

\bibitem{SEER_data}
Surveillance, {E}pidemiology, and {E}nd {R}esults ({SEER}) program
  (www.seer.cancer.gov) {SEERS}tat database: incidence - {SEER} research data,
  nov 2020.
\newblock released April 2021.

\bibitem{SisonGlaz1995}
Cristina~P. Sison and Joseph Glaz.
\newblock Simultaneous confidence intervals and sample size determination for
  multinomial proportions.
\newblock {\em Journal of the American Statistical Association},
  90(429):366--369, 1995.

\bibitem{sison1995simultaneous}
Cristina~P Sison and Joseph Glaz.
\newblock Simultaneous confidence intervals and sample size determination for
  multinomial proportions.
\newblock {\em Journal of the American Statistical Association},
  90(429):366--369, 1995.

\bibitem{song1999general}
Fei Song and W~Bruce Croft.
\newblock A general language model for information retrieval.
\newblock In {\em Proceedings of the eighth international conference on
  Information and knowledge management}, pages 316--321. ACM, 1999.

\bibitem{tibshirani2016exact}
Ryan~J Tibshirani, Jonathan Taylor, Richard Lockhart, and Robert Tibshirani.
\newblock Exact post-selection inference for sequential regression procedures.
\newblock {\em Journal of the American Statistical Association},
  111(514):600--620, 2016.

\bibitem{vysochanskij1980justification}
DF~Vysochanskij and Yu~I Petunin.
\newblock Justification of the 3$\sigma$ rule for unimodal distributions.
\newblock {\em Theory of Probability and Mathematical Statistics}, 21(25-36),
  1980.

\bibitem{weinstein2020selective}
Asaf Weinstein and Daniel Yekutieli.
\newblock Selective sign-determining multiple confidence intervals with fcr
  control.
\newblock {\em Statistica Sinica}, 30(1):531--555, 2020.

\end{thebibliography}
\end{document}